\theoremstyle{definition}
\newtheorem{definition}{Definition}
\newtheorem{theorem}{Theorem}
\newtheorem{lemma}{Lemma}
\newtheorem{remark}{Remark}
\newtheorem{assumption}{Assumption}
\newtheorem{example}{Example}
\newtheorem{objective}{Objective}
\title{\LARGE \bf
Rate-Tunable Control Barrier Functions: \\Methods and Algorithms for Online Adaptation
}
\author{Hardik Parwana, \IEEEmembership{Member, IEEE}, Dimitra Panagou, \IEEEmembership{Senior Member, IEEE}
\thanks{Manuscript submitted on March 20, 2023. This work was partially sponsored by the Office of Naval Research (ONR), under grant number N00014-20-1-2395. The views and conclusions contained herein are those of the authors only and should not be interpreted as representing those of ONR, the U.S. Navy or the U.S. Government.}
\thanks{Hardik Parwana and Dimitra Panagou are with Department of Robotics, University of Michigan, MI 48105, USA (email: hardiksp@umich.edu, dpanagou@umai.edu). }}
\newcommand{\reals}{\mathbb{R}}
\newcommand{\s}{\mathcal{S}}
\newcommand{\X}{\mathcal{X}}
\newcommand{\U}{\mathcal{U}}
\newcommand{\K}{\mathcal{K}}
\newcommand{\C}{\mathcal{C}}
\newcommand{\V}{\mathcal{V}}
\newcommand{\classK}{\mbox{class-$\K$} }
\newcommand{\Int}{\text{Int} }
\newcommand{\eqn}[1]{\begin{align}#1\end{align}}
\newcommand{\red}[1]{\textcolor{red}{#1}}
\begin{document}

\maketitle
\thispagestyle{empty}
\pagestyle{empty}


\begin{abstract}
Control Barrier Functions offer safety certificates by dictating controllers that enforce safety constraints. However, their response depends on the \classK function that is used to restrict the rate of change of the value of the barrier function along the system trajectories. This paper introduces the notion of a Rate-Tunable (RT) CBF, which allows for online tuning of the response of CBF-based controllers. In contrast to existing approaches that use a fixed \classK function to ensure safety, we parameterize and adapt the \classK function parameters online. We show that this helps improve the system's response in terms of the resulting trajectories being closer to a nominal reference while being sufficiently far from the boundary of the safe set. We provide point-wise sufficient conditions to be imposed on any user-given parameter dynamics so that multiple CBF constraints continue to admit a common control input with time. Finally, we introduce RT-CBF parameter dynamics for decentralized noncooperative multi-agent systems, where a trust factor, computed based on the instantaneous ease of constraint satisfaction, is used to update parameters online for a less conservative response.

\end{abstract}

\section{Introduction \& Previous Works}


The notion of a control barrier function (CBF) has emerged as a tool to ensure constraint satisfaction for dynamical systems. The principle is as follows: Given a constraint function, termed barrier function thereafter, whose zero super-level sets define a constrained set, termed also the safe set, 
one restricts the rate of change of the barrier function along the system trajectories using a $\classK$ function of the barrier function \cite{ames2016control, ames2019control}. If such a condition can be satisfied everywhere in the constrained set under the given dynamics and control input constraints, then the barrier function is called a CBF, and the constrained set is forward invariant. This method, in conjunction with control Lyapunov functions (CLFs) for stability, has been employed to design safe controllers for several applications.

\subsection{Background and open challenges}
Despite significant extensions of the CBF concept to address various challenges (e.g., modeling/parameter uncertainty, input constraints) over the last few years, we highlight three unsolved issues. 

\subsubsection{Finding valid CBFs} Given a single constraint function, finding a suitable (valid) CBF reduces in effect to finding both a barrier function and a $\classK$ function so that the CBF condition can be realized for given dynamics and control input bounds. This task is non-trivial and several approaches, mostly offline, have been proposed recently\cite{dawson2023safe,jagtap2020control}. When the safe set is represented as an intersection of the sets defined by several constraints, for example, when there are multiple unsafe regions such as obstacles, the objective becomes finding a controller that simultaneously ensures the invariance of the intersection of the sets. Existing CBF implementations either combine all barriers into a single barrier function \cite{glotfelter2017nonsmooth, stipanovic2012monotone, panagou2013multi} resulting in a single CBF derivative condition, or consider each barrier separately\cite{usevitch2020strong,xiao2020feasibility,wang2022ensuring} thereby imposing multiple CBF derivative conditions simultaneously. Either case only adds to the complexity of finding valid CBFs. Furthermore, imposing multiple CBFs and control input bounds in the optimization problem can lead to infeasibility of the solution (controller) at the current or at some future time. 
We call this problem ``lack of persistent feasibility" of the low-level CBF controller. Note that a good choice of CBF parameters may lead to persistent feasibility, whereas a poor choice may not; however, finding a good choice of parameters is not a trivial problem. 

\subsubsection{High relative degree}
The notion of CBFs as first introduced in \cite{ames2016control} applies to constraints of relative degree $r=1$ with respect to the system dynamics, i.e., to constraints whose first-order derivative is a function of the control input. The concept is extended to constraint functions of higher relative degree $r>1$, i.e., to the case where the control input appears in the $r$-th -order derivative of the constraint function. \cite{nguyen2016exponential} proposed the class of Exponential CBFs when the $\classK$ functions used in CBF derivative condition are linear in their argument. They employ input-output linearization and design controllers via pole placement so that the barrier state is stabilized to zero. \cite{xiao2019control} generalizes Exponential CBFs to generic nonlinear $\classK$ functions in the form of Higher-Order CBFs (HOCBF). \cite{taylor2022safe} proposes an alternate Lyapunov backstepping-based design for higher-order systems that always find a first-order barrier function. 

\subsection{Current approaches and limitations}
\label{section::review_limitations}
\subsubsection{Finding valid CBFs} In recognition of the importance of finding a valid control barrier function, several works have also attempted to learn a barrier function, or to find the domain over which a given barrier function satisfies the CBF condition. \cite{agrawal2021safe} does offline analysis to find the domain over which CBF condition can be satisfied in the presence of input bounds. \cite{robey2020learning, lindemann2021learning} perform offline analysis to find a barrier function that satisfies the CBF derivative condition. A more thorough survey on learning-based methods can be found in \cite{dawson2022safe}. 

When multiple constraints are present, \cite{stipanovic2012monotone, panagou2013multi} combine barrier functions into a single barrier constraint through smoothed minimum (or maximum) operator. \cite{black2022adaptation} proposes a novel weighted-summation technique, where the weights are tuned for performance and to avoid singular states where the control input has no contribution to barrier function derivatives (usually the latter is only assumed, but not guaranteed, to not happen in the literature). The authors in \cite{glotfelter2017nonsmooth} perform non-smooth analysis to construct a barrier function with the non-smooth minimum (or maximum) operation. The aforementioned approaches thus either involve smoothing approximations or require intricate non-smooth analysis. When multiple CBF derivative conditions are considered simultaneously instead as in 
\cite{usevitch2020strong, xiao2020feasibility,wang2022ensuring},  \cite{tan2022compatibility} proposes an offline sampling-and-grid refinement method to search the domain of state space over which multiple control barrier functions are compatible.
Multiple CBF constraints also arise naturally in multi-agent scenarios. \cite{lindemann2019control,lindemann2020barrier} encode Signal Temporal Logic (STL) tasks and safety specifications as CBF constraints. \cite{borrmann2015control, usevitch2021adversarial} propose CBFs for safe swarm behavior in the presence of uncooperative agents when their identities are known in advance. These uncooperative agents serve as an excellent example of uncontrollable systems, where different CBF $\classK$ functions are suited to controllable and uncontrollable dynamics. The safe sets are also time-varying for the controllable dynamics, which 
most of the aforementioned papers have not considered in their theoretical analysis. \cite{usevitch2021adversarial} designs a robust CBF controller for systems with adversarial agents without detecting malicious agents. \cite{mustafa2022adversary} detects malicious agents by checking for collisions in the predicted future wherein the ego and target agents, forgoing their other objectives, try to only maintain and violate respectively, the CBF derivative constraint. Both the above-mentioned approaches are conservative due to the consideration of worst-case bounds. 

\subsubsection{Feasibility} All of the aforementioned works use a constant $\classK$ function over the whole domain, and its parameters are often pre-tuned manually before deploying for an actual application. However, since it is not possible to test each and every scenario in the offline phase, there is a dire need for online adaptation, especially to maintain the feasibility of the controller in the presence of multiple constraints and input bounds.
Several works relax the CBF constraint in the QP to ensure that the QP controller has a solution. \cite{zeng2021safety} proposes an optimal-decay CBF-QP that multiplies the $\classK$ function by a scalar variable that is optimized for (along with the control input) in the QP. \cite{wang2022ensuring}, \cite{garg2019control} introduced similar relaxation strategies for CBF constraints with the subtle difference being that instead of linear multiplicative factor to generic $\classK$ function, they only consider linear $\classK$ functions and treat the $\classK$ function parameter as an optimization variable. \cite{seo2022safety} considers control non-affine dynamics and in light of the resulting loss of linearity of the CBF constraint, chooses to impose it through a penalty term in the cost function, thereby converting it into an unconstrained optimization problem.


The aforementioned adaptive approaches can be thought of as adapting \textit{on a need basis}; i.e., they relax the CBF constraint whenever the original $\classK$ function leads to infeasibility. However, persistent feasibility is not guaranteed for multiple CBF constraints in the presence of input bounds. Since they also solve for the control input point-wise (via a QP), they are still sub-optimal in the long term. Among the non-myopic approaches, \cite{xiao2020feasibility} multiplies a scalar penalty to the $\classK$ functions, and creates a dataset of persistently feasible and eventually infeasible parameters offline by randomly sampling parameters and simulating the system forward. Then they fit a differentiable classifier that is used to impose the resulting feasibility margin as another HOBCF constraint in the system. \cite{ma2022learning} performs offline learning to learn the best-performing parameters of the $\classK$ function as a function of the state, although without consideration of feasibility. 
\subsection{Contributions}

Building upon our previous work \cite{parwana2022trust}, we introduce a new notion of a Rate-Tunable Control Barrier Function (RT-CBF) in Section \ref{section::RT-CBF} that allows for a time-varying $\classK$ function while ensuring safety.
In practice, this allows consideration of parametric $\classK$ functions and adaptation of their parameters online. This adaptation can be used to make the controller less or more conservative without jeopardizing safety.

 It is also noteworthy that this adaptation facilitates the consideration and satisfaction of multiple time-varying barrier constraints, by making them easier to tune for performance, especially when they do not represent similar physical quantities (e.g., when imposing constraints on the rotational dynamics, and constraints on the translational dynamics for a quadrotor). In Section \ref{section::problem_formulation}, we pose the problem of guaranteeing the existence of a solution to CBF-QP controller as persistently ensuring \textit{compatibility} between multiple CBF constraints along system trajectories. We then establish the necessity and sufficiency of our notion of safety in Section \ref{section::RT-CBF}. 
Designing the parameter dynamics however is non-trivial, especially in the presence of multiple constraints, for myopic controllers. We introduce pointwise sufficient conditions on the rate of change of parameters for enforcing feasibility in Section \ref{section::suboptimal_parameter_dynamics}. Although the pointwise design is suboptimal it improves upon the CBF-QP controllers. It also allows the incorporation of any
user-designed rule, even heuristic, for updating the $\classK$ function, and project it to a set of feasible update rules using our sufficient conditions to ensure feasibility whenever these conditions are well defined. Finally, as a case study, we design RT-CBFs for decentralized control for multi-systems in Section \ref{section::multi-agent} based on our previous work in \cite{parwana2022trust}. Specifically, we design the parameter dynamics based on a trust factor that is computed based on the instantaneous ease of satisfaction of the CBF constraints, and illustrate how this can be applied to robots of heterogeneous dynamics. 

\subsection{Comparison to Earlier Work}
The work most similar to our notion of CBF is \cite{xiao2021adaptive}, wherein the authors introduce a time-varying \textit{penalty parameter} that multiplies to the fixed $\classK$ function. The penalty parameter is allowed to change with time, but is constrained to be non-negative and hence is itself posed as higher-order CBF. The approach proposes another low-level optimization problem that finds the penalty parameter closest to a nominal parameter value subject to HOCBF constraints. However, this work does not consider multiple barrier functions simultaneously. And while it does consider control input bounds, the proof for persistent feasibility (existence of a solution to HOCBF QP for all future times) is based on the assumption that the original MPC-like formulation always admits a solution irrespective of the initial state, which is possible only because the environment (and hence the constraints) is time-invariant. 

Compared to our own previous work in \cite{parwana2022trust}, we make the following new contributions: (1) Modification of the definition of RT-CBFs to represent a weaker safety condition with more practical importance. (2) Proof of necessity and sufficiency of RT-CBFs, and finally, (3) Consideration of control input bounds and constraints of higher relative degree with respect to the system dynamics and evaluation of our methods on several new simulation case studies, including multi-agent systems with higher-order barrier functions. 

The rest of the paper is structured as follows: our notations and some preliminary information is discussed in Sections \ref{section::notation} and \ref{section::preliminaries}. Section \ref{section::problem_formulation} formulates our problem and Section \ref{section::RT-CBF} introduces our notion of RT-CBFs. We then propose pointwise sufficient conditions on \classK function parameter dynamics for persistent feasibility in Section \ref{section::dynamics_design} and introduce RT-CBFs for multi-agent systems in Section \ref{section::multi-agent}. Finally, we present simulation results in Section \ref{section::simulation} and conclude the paper in Section \ref{section::conclusion}.

\section{Notations}
\label{section::notation}
The set of real numbers is denoted as $\reals$ and the non-negative real numbers as $\reals^+$. Given $x\in \reals$, $y \in \reals^{n_i}$, and $z\in \reals^{n_i\times m_i}$, $|x|$ denotes the absolute value of $x$ and $||y||$ denotes $L_2$ norm of $y$. The interior and boundary of a set $\C$ are denoted by $\textrm{Int}(\C)$ and $\partial \C$. For $a\in \reals^+$, a continuous function $\alpha:[0,a)\rightarrow[0,\infty)$ is a $\classK$ function if it is strictly increasing and $\alpha(0)=0$.  A continuous function $\alpha:(-b,a)\rightarrow (-\infty,\infty)$ for $a,b\in \reals^+$ is an extended $\classK$ function if it is strictly increasing and $\alpha(0)=0$. Furthermore, if $a=\infty$ and $\lim_{r\rightarrow \infty} \alpha(r)=\infty$, then it is called extended class-$\mathcal{K}_\infty$. 
The $k^{th}$ time derivative of a function $h(t,x):\reals^{+}\times \reals^{n}\rightarrow \reals$ is denoted as $h^{(k)}$. 
For brevity, we will refrain from mentioning explicit arguments whenever the context is clear. For example, $h(x)$ may simply be denoted as $h$. The Lie derivative of a function $h$ w.r.t a function $f$ is denoted as $L_f h = \frac{\partial h}{\partial x}f$. The angle between $\gamma$ two vectors $a,b\in \reals^n$ is given by $\cos \gamma=\frac{a^T b}{||a||~||b||}$ and the inner product between them is denoted as $\langle a, b \rangle$.

\section{Preliminaries}
\label{section::preliminaries}
\subsubsection{System Description}
Consider the nonlinear control-affine dynamical system:
\eqn{
\dot{x} = f(x) + g(x)u,
\label{eq::dynamics_general}
}
where $x\in \X \subset \reals^{n}$ and $u \in \U \subset \reals^{m}$ represent the state and control input, and $f:\X\rightarrow \reals^{n}$ and $g:\X\rightarrow \reals^{m}$ are locally Lipschitz continuous functions. The set $\s(t)$ of allowable states at time $t$ is specified as an intersection of $N$ sets $\mathcal S_i(t),i\in\{1,2,..,N\}$, each of which is defined as the zero-superlevel set of a (sufficiently smooth) function $h_i:\reals^+ \times \mathcal{X} \rightarrow \reals$ as:
\begin{subequations}
    \begin{align}
        \s_i(t) & \triangleq \{ x \in \X : h_i(t,x) \geq 0 \}, \label{eq::safeset1} \\
        \partial \s_i(t) & \triangleq \{ x\in \X: h_i(t,x)=0 \}, \label{eq::safeset2}\\
        \Int (\s_i)(t) & \triangleq \{ x \in \X: h_i(t,x)>0  \}. \label{eq::safeset3}
    \end{align}
    \label{eq::safeset}
\end{subequations}
we also define a function  $x:\reals^+\rightarrow \mathcal{X}$ to denote the state $x(t)$ at time $t$. Note that $x(t)$ may be either an arbitrarily defined state trajectory, or may result from a closed-loop system response under a specific controller; which case is considered will be explicitly stated. Similarly the function $u: \reals^+\rightarrow\reals^m$ defines a control input trajectory.
\subsubsection{Control Barrier Functions}\footnote{Definitions \ref{definition::cbf_definition} and \ref{definition::hocbf_definition} were presented in their original papers for the time-invariant safe sets $\s_i$. We note that an extension to the time-varying case can be proven with Nagumo's theorem applied to non-autonomous systems \cite[Theorem 3.5.2]{carja2007viability} and hence we directly present that. This follows also the notation in \cite{lindemann2018control}. }

\begin{definition}
\label{definition::cbf_definition}
(Control Barrier Function) \cite{ames2016control} For the dynamical system (\ref{eq::dynamics_general}), $h_i : \mathbb R^+ \times \mathcal X\rightarrow \mathbb R$ is a control barrier function (CBF) on the set $\s_i(t)$ defined by (\ref{eq::safeset1})-(\ref{eq::safeset3}) for $t\geq 0$, if there exists a class-$\mathcal{K}$ function $\alpha_i : \reals \rightarrow \reals^+$ 
such that 
\eqn{
 \sup_{u\in \mathcal{U}} \left[ \frac{\partial h_i(t,x)}{\partial t} + L_f h_i(t,x)+ L_g h_i(t,x)u \right] \geq -\alpha_i(h_i(t,x)) \nonumber \\
 \forall x\in \mathcal{S}_i, \forall t>0.
  \label{eq::cbf_derivative}
}
\end{definition}
Henceforth, we refer to \eqref{eq::cbf_derivative} as the \textit{CBF derivative condition}.

\begin{theorem}(Set Invariance) \cite{ames2016control}
Given the dynamical system (\ref{eq::dynamics_general}) and a set $\s_i(t)$ defined by (\ref{eq::safeset1})-(\ref{eq::safeset3}) for some continuously differentiable function $h_i:\reals^+ \times \reals^n\rightarrow \reals$, if $h_i$ is a control barrier function on the set 
$\s_i(t)$, then $\s_i(t)$ is forward invariant.
\end{theorem}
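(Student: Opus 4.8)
The plan is to show that any closed-loop trajectory starting inside $\s_i(t_0)$ can never leave the time-varying family $\s_i(t)$, by tracking the scalar quantity $h_i(t,x(t))$ and showing it remains nonnegative. First I would extract an admissible closed-loop controller from the CBF condition. Since the supremum in \eqref{eq::cbf_derivative} is nonnegative for every $x\in\s_i$ and $t>0$, the pointwise admissible set
\[
K(t,x) = \left\{ u \in \U : \frac{\partial h_i(t,x)}{\partial t} + L_f h_i(t,x) + L_g h_i(t,x)\, u \geq -\alpha_i(h_i(t,x)) \right\}
\]
is nonempty on $\s_i(t)$. I would then invoke the standard result that, under the stated regularity of $f$, $g$, and $h_i$, this set-valued map admits a locally Lipschitz selection $u(t,x)\in\U$, so that the closed-loop vector field $f + g\,u(t,x)$ is locally Lipschitz and the closed-loop system \eqref{eq::dynamics_general} has a unique solution $x(t)$.

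Next, I would define $\eta(t) := h_i(t,x(t))$ and differentiate along the closed-loop trajectory. By construction of $u$,
\[
\dot{\eta}(t) = \frac{\partial h_i}{\partial t} + L_f h_i + L_g h_i\, u(t,x) \geq -\alpha_i(\eta(t)).
\]
I would compare this against the scalar system $\dot{y} = -\alpha_i(y)$ with $y(t_0) = \eta(t_0) \geq 0$. Because $\alpha_i$ is a \classK function, $\alpha_i(0)=0$, so $y\equiv 0$ is a solution and any solution from a nonnegative initial condition stays nonnegative. By the comparison lemma this yields $\eta(t) \geq y(t) \geq 0$ for all $t\geq t_0$, i.e. $h_i(t,x(t)) \geq 0$, which is exactly $x(t)\in \s_i(t)$, establishing forward invariance.

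As an alternative route — the one matching the footnote — I would argue directly via Nagumo's theorem for non-autonomous systems \cite{carja2007viability}: a trajectory can exit $\s_i(t)$ only by crossing $\partial\s_i(t)$, where $h_i(t,x)=0$; at such a point the CBF condition gives $\dot{\eta} \geq -\alpha_i(0) = 0$, so the closed-loop field satisfies the subtangentiality (inward-pointing) condition relative to the time-varying set, which is precisely Nagumo's criterion for invariance.

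I expect the main obstacle to be the first step, namely guaranteeing a well-posed closed-loop system. The CBF condition only asserts existence of an admissible $u$ \emph{pointwise}, whereas invariance of the continuous-time trajectory requires a sufficiently regular (locally Lipschitz) selection so that solutions exist, are unique, and $\eta(t)$ is differentiable; this is where continuity of $\partial h_i/\partial t$, $L_f h_i$, $L_g h_i$ and the regularity of $\alpha_i$ are needed. The remaining comparison (or Nagumo) argument is then routine, with only the mild care that a \classK function need not be Lipschitz at the origin, which must be accounted for when invoking uniqueness for the comparison ODE.
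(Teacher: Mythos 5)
Your second (Nagumo) route is the one the paper itself relies on: the theorem is quoted from \cite{ames2016control} without an in-paper proof, and the paper's only argument is the footnote stating that the time-varying case follows from Nagumo's theorem for non-autonomous systems \cite[Theorem 3.5.2]{carja2007viability}. That route is sound: at any boundary point the CBF condition gives $\dot h_i \geq -\alpha_i(0) = 0$, which is exactly the subtangentiality condition for the time-varying set, and the viability theorem does the rest using local Lipschitzness of the closed-loop field.

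Your primary (comparison-lemma) route, however, has a genuine gap that you did not flag. Definition \ref{definition::cbf_definition} imposes the inequality only for $x\in\s_i$, so the differential inequality $\dot\eta(t) \geq -\alpha_i(\eta(t))$ is available only while $x(t)\in\s_i(t)$, i.e., only while $\eta(t)\geq 0$; invoking the comparison lemma ``for all $t\geq t_0$'' therefore presupposes exactly what is to be proven. Moreover, the scalar inequality restricted to $\{\eta\geq 0\}$ genuinely does not imply $\eta\geq 0$: take $\alpha_i(r)=3r^{2/3}$ (class-$\mathcal{K}$, non-Lipschitz at the origin) and $\eta(t)=-(t-t_1)^3$; then $\dot\eta=-\alpha_i(\eta)$ holds wherever $\eta\geq 0$, yet $\eta$ crosses zero at $t_1$ and becomes negative. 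Even for Lipschitz $\alpha_i$, a trajectory that slides along $\eta\equiv 0$ and then departs with $\dot\eta(t_1)=0$ (e.g., $\eta=-(t-t_1)^2$ afterwards) is consistent with the constraint on $\{\eta\geq 0\}$. The standard fixes are (a) to require the CBF inequality on an open set $\mathscr{D}_i \supset \s_i$ with an extended class-$\mathcal{K}$ function, as the original reference \cite{ames2016control} does (and as this paper's own HOCBF Definition \ref{definition::hocbf_definition} does), after which a Gronwall/comparison argument works in a neighborhood of the boundary; or (b) to use Nagumo, which exploits the condition holding at every boundary point of the set rather than only along the single trajectory. Separately, your appeal to a ``standard result'' producing a locally Lipschitz selection of $K(t,x)$ overstates what is available (Michael's theorem yields only continuous selections); the theorem should be read as asserting invariance under any given locally Lipschitz controller taking values in $K(t,x)$ -- you correctly identified this as the obstacle, but it is an assumption of the statement, not a provable step.
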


When $L_g h_i(t,x) \equiv 0$ $\forall (t,x)\in\mathbb R^+\times \mathcal X$, then the control input $u$ does not appear in the left-hand side of the CBF condition \eqref{eq::cbf_derivative}. Suppose the relative degree of the function $h_i$ w.r.t. the control input $u$ under the dynamics \eqref{eq::dynamics_general} is equal to $r_i\geq 2$. We can then define $r_i$ functions as follows:
\begin{subequations}
    \eqn{
   \psi_i^0(t,x) &= h_i(t,x), \\
   \psi_i^k(t,x) &= \dot \psi_i^{k-1}(t,x) + \alpha_{i}^k (\psi^{k-1}_i(t,x)), \; k\in \{1,2,\dots,r_i-1\} \label{eq::hocbf_derived_barriers},
}
\label{eq::hobcf_barriers}
\end{subequations}
and denote their zero-superlevel sets respectively, as: 
\eqn{
  \mathscr{C}_i(t) &= \{x \; | \; \psi_i^k(t,x)\geq 0, \quad \forall k\in \{0,..,r_i-1\} \}.
  \label{eq::hobcf_intersection}
}
\begin{definition}(HOCBF CBF)\cite{tan2021high}
\label{definition::hocbf_definition}
    The function $h_i(t,x):\reals^+\times \reals^{n}\rightarrow \reals$ is a Higher-Order CBF (HOCBF) of $r_i$-th order on the set $\mathscr{C}_i(t)$ if there exist $r_i$ extended $\classK$ functions $\alpha_i^k:\reals \rightarrow \reals, \;  k\in\{1,2,..,r_i\}$, and an open set $\mathscr{D}_i(t)\subset \reals^+ \times \reals^n$ with $\mathscr{C}_i(t) \subset \mathscr{D}_i(t) \subset \mathcal{X}$ such that
     \eqn{
      \dot \psi_i^{r_i-1}(t,x,u) \geq -\alpha_i^{r_i} (\psi_i^{r_i-1}(t,x)), ~ \forall x \in \mathscr{D}_i(t), \forall t\geq 0.
      \label{eq::hobcf_cbf_derivatrive}
    }
\end{definition}
In this work, for each constraint function $h_i$, we consider $r_i$ \emph{parametric} $\classK$ functions $\alpha_i^k: \reals \times \mathcal A_i^k \rightarrow \reals^+$, $k\in\{1,\dots,r_i\}$, where $\mathcal A_i^k\subset \mathbb R^{n_{\alpha_i}^k}$, where $n_{\alpha_i}^k$ is the number of parameters of the function $\alpha_i^k$. Denote the parameters (of the function $\alpha_i^k$) by the vector $\theta_{\alpha_i^k}\in \mathcal A_i^k$. For example, a linear $\classK$ function 
\eqn{
  \alpha_i^k(\nu_i^k,h_i) = \nu_i^k h_i, \; \nu_i^k \in \reals
  \label{eq::linear_classK_example}
}
is described by its constant parameter $\theta_{\alpha_i^k} = \nu_i^k$. The $\classK$ function may then be described as $\alpha_i^k(\theta_{\alpha_i^k},h_i)$. 

\begin{remark}For brevity, whenever clear from the context, in the sequel we drop $\theta_{\alpha_i^k}$ as an argument of $\alpha_i^k(\theta_{\alpha_i^k})$, and denote the function as $\alpha_i^k$.
\end{remark}

The (column) vector concatenation of the parameters of all the $r_i$ \classK functions $\alpha_i^k$, $k\in\{1,\dots,r_i\}$ of the $i$-th barrier function $h_i(t,x)$ is denoted as $\theta_{\alpha_i}=\begin{bmatrix}\theta_{\alpha_i^1}^T \dots \theta_{\alpha_i^{r_i}}^T\end{bmatrix}^T$, and the (column) vector of all the parameters of all $\classK$ functions $\alpha_i^k, i\in \{1,..,N\}, k\in \{1,..,r_i\}$ is denoted as $\theta_{\alpha} = \begin{bmatrix}\theta_{\alpha_1}^T & \dots & \theta_{\alpha_N}^T\end{bmatrix}^T$. We denote the size of vectors $\theta_{\alpha_i}$ and $\theta_{\alpha}$ as $n_{\alpha_i}$, and $n_\alpha$, respectively.

In the ensuing sections, we allow the parameters $\theta_{\alpha_i}$ to change by designing their dynamics $\theta_{\alpha_i}^{(p)}$, where $z^{(p)}$ denotes the time derivative of order $p$. The conditions  \eqref{eq::hocbf_derived_barriers} and \eqref{eq::hobcf_cbf_derivatrive} then also depends on the derivatives of $\theta_{\alpha_i}$.
For example, for linear \classK functions as in \eqref{eq::linear_classK_example}, we have
\begin{subequations}
    \eqn{
    \psi_i^0 &= h_i \\
\psi_i^1 &= \dot \psi^0 + \nu_i^1 \psi_i^0 \\
\psi_i^2 &= \ddot \psi^0 + \dot \nu_i^1 \psi_i^0 + \nu_i^1 \dot \psi_i^0 + \nu_i^2( \psi_i^0 + \nu_i^1 \psi_i^0) \\
\psi_i^3 &= \dddot \psi^0 + \ddot \nu_i^1 \psi_i^0 + 2 \dot \nu_i^1 \dot \psi_i^0 + \dot \nu_i^1 \ddot \psi_i^0 + \dot \nu_i^2( \psi_i^0 + \nu_i^1 \psi_i^0) \nonumber \\
& \quad + \dot \nu_i^2( \psi_i^0 + \nu_i^1 \psi_i^0) + \nu_i^3 ( \psi_i^2 ) 
}
\label{eq::derived_barriers_example}
\end{subequations}
and so on. In general, $r_i-k$ derivatives of $\theta_i^k$ are required for computing the terms in \eqref{eq::hobcf_cbf_derivatrive}. For example, for a first-order function $h_i$ with $r_i=1$, no derivatives of $\theta_i^1$ are required. In this work, we do allow $\theta_i^1$ to change even if $r_i=1$. Therefore, in general, we consider $r_i-k+1$ derivatives $\theta_i^k$ and design $(\nu_i^k)^{(r_i-k+1)}$. We denote the state resulting from $\theta_i^k$ dynamics as $\Theta_{\alpha_i^k}$ and their derivatives as $\dot \Theta_{\alpha_i^k}$. For example, for \eqref{eq::derived_barriers_example}, $\Theta_{\alpha_i^k}=[\nu_i^1, \dot \nu_i^1, \ddot \nu_i^1, \nu_i^2, \dot \nu_i^2]^T$ and 
\eqn{
\dot \Theta_{\alpha_i^k} = \begin{bmatrix}
    \dot \nu_i^1 \\
    \ddot \nu_i^1 \\
    \dddot \nu_i^1 \\
    \dot \nu_i^2 \\
    \ddot \nu_i^2
\end{bmatrix} = \begin{bmatrix}
    0 & 1 & 0 & 0 & 0 \\ 
    0 & 0 & 1 & 0 & 0 \\
    0 & 0 & 0 & 1 & 0 \\
    0 & 0 & 0 & 0 & 1 \\
    0 & 0 & 0 & 0 & 0 \\
\end{bmatrix}\Theta_{\alpha_i^k} + \begin{bmatrix}
    0 & 0 \\
    0 & 0 \\
    1 & 0 \\
    0 & 0 \\
    0 & 1
\end{bmatrix}\begin{bmatrix}
    \dddot \nu_i^1 \\
    \ddot \nu_i^2
\end{bmatrix}
\label{eq::Theta_dynamics}
}
As before, we denote with vectors $\Theta$ and $\dot \Theta$, the concatenation of parameters and their derivatives for $\forall i\in \{1,..,N\}, k\in \{1,..,r_i\}$. 

In this work, we develop online adaptation laws for $\dot \Theta_{\alpha}$, namely $\dddot \nu_i^1, \ddot \nu_i^2$ in the example \eqref{eq::Theta_dynamics}. We present the formal objective and problem statement in the next section.

\section{Problem Formulation}
\label{section::problem_formulation}

Given a barrier function $h_i$, $i\in \{1,..,N\}$, and the functions $\psi_i^k, k\in \{1,..,r_i\}$ defined in \eqref{eq::hobcf_barriers}, the control set that satisfies the CBF derivative constraint \eqref{eq::hobcf_cbf_derivatrive} for a given state $x$ at time $t$ is given by
\eqn{
\mathcal{U}_i(t,x) = \{ u \in \reals^m ~|~ \dot \psi_i^{r_i-1}(t,x,u) \geq -\alpha_i^{r_i} (\psi_i^{r_i-1}(t,x)) \}.
}
The intersection of all such sets with the control input domain is given as
\eqn{
    \mathcal{U}_c(t,x) = \bigcap\limits_{i=1}^{N} ~ \mathcal{U}_i(t,x) \bigcap \mathcal{U}.
    \label{eq::cbf_control_intersection}
}
\begin{definition}
    (\textbf{Compatible}) A set of $N$ control barrier functions $h_i(t,x)$, $i\in\{1,\dots,N\}$ is called compatible at time $t$ and at state $x$ if $\mathcal{U}_c(t,x)\neq\emptyset$. 
\end{definition}

We consider the following class of controllers, which is  commonly employed to enforce multiple CBF constraints:
\begin{subequations}
        \begin{align}
     u = \arg \min_{u\in \mathcal{U}} \quad &  ||u-u_r(t,x)|| + M \delta^2 \\
         \textrm{s.t.} \quad & \dot{V}(t,x,u) \leq - k V(t,x) + \delta, \label{eq::qp_hoclf_constraint} \\
          & \dot \psi_i^{r_i-1}(t,x,u) \geq -\alpha_i^{r_i} (\psi_i^{r_i-1}(t,x)), \label{eq::qp_hocbf_constraint} \; \\ & \quad \quad \quad \quad  \quad \quad \quad \quad \quad 
 i\in \{1,2,..,N\} \nonumber
        \end{align}
        \label{eq::HOCBF-CLF-QP}
\end{subequations}
where $u_r: \reals^+\times \reals^n\rightarrow \reals^m$ is the reference control input that is designed without any regard to constraints, $M\in \reals$ is positive definite weighting matrices, $V(t,x)$ a control Lyapunov function (CLF) encoding convergence objectives for the system trajectories, $k$ is the exponential rate of convergence, and $\delta$ is a slack variable used to relax the CLF constraint \eqref{eq::qp_hoclf_constraint}. The optimization \eqref{eq::HOCBF-CLF-QP} is a QP when the dynamics is control-affine as in \eqref{eq::dynamics_general} and $\mathcal{U}$ can be expressed in the form of a polytope $Au\leq b, A\in \reals^{q\times m}, b\in \reals^{q\times 1}, q>0$. 

A solution to \eqref{eq::HOCBF-CLF-QP} exists at a given $(t,x)$ if the set of $h_i(t,x)$, $i\in\{1,\dots,N\}$ is compatible. Note that even when $\mathcal{U}=\reals^m$, that is, when the control input is unbounded, the set of $h_i(t,x)$ may not be compatible. One reason is that depending on the choice of functions $\alpha_i^k$, and the resulting reduction of the size of the safe set,  \eqref{eq::qp_hocbf_constraint} may not admit a solution. 

Therefore, in our approach, we allow adaptation of the parameter vector $\theta_\alpha$ to enforce the compatibility between constraints for all times, and also to tune the response of the controller \eqref{eq::HOCBF-CLF-QP} to meet performance objectives, for example, to make controller less or more conservative in the sense of allowing the trajectories to approach more or less closer to the boundary of the safe set.
We now state our objective.


\begin{objective}
    Consider the dynamical system \eqref{eq::dynamics_general} subject to $N$ barrier constraints $h_i(t,x)\geq 0, i\in\{1,..,N\}$, $\forall (t,x)\in\mathbb R^+\times \mathcal X$, with relative degree $r_i$ w.r.t. \eqref{eq::dynamics_general}. Let the $r_i$ functions $\psi_i^k, k\in\{1,..,r_i\}$ defined in \eqref{eq::hobcf_barriers} with corresponding $\classK$ functions $\alpha_i^k$ with parameters $\theta_{\alpha^k_i}$, giving rise to the HO-CBF condition \eqref{eq::hobcf_cbf_derivatrive}. Let the control input be designed as in \eqref{eq::HOCBF-CLF-QP} and let $\mathscr{C}(t) = \cap_{i=1}^{N} \mathscr{C}_i(t)$. If at $t=0$, $\mathscr{C}(0,x(0))\neq \emptyset$, then design an update law $\dot \Theta: \reals^+ \times \reals^n \times \reals^{n_\alpha}\rightarrow n_{\alpha}$ so that $\mathcal{U}_c(t,x(t))\neq \emptyset ~ \forall t\geq 0$.
\end{objective}

\section{Rate-Tunable CBFs}
\label{section::RT-CBF}
In this section, we introduce our notion of RT-CBFs and analyze the resulting safety guarantees. 

Consider the system dynamics in (\ref{eq::dynamics_general}) augmented with the state $\Theta_\alpha\in\reals^{n_\alpha}$ that obeys the dynamics
\eqn{
\begin{bmatrix} \dot x \\ \dot \Theta_{\alpha_i} \end{bmatrix} = \begin{bmatrix} f(x) + g(x) u \\ f_{\alpha_i}(x, \Theta_{\alpha_i}) \end{bmatrix},
\label{eq::augmented}
}
where $f_{\alpha_i}:\mathcal{X}\times \mathcal A_i \rightarrow \mathcal A_i$ is a locally Lipschitz continuous function w.r.t. $(x,\Theta_{\alpha_i})$, where $\mathcal{A}_i
\subset \reals^{n_{\alpha_i}}$ is a compact set, $i \in \{1,\dots,N\}$.

Assume that $\Theta_{\alpha_i}(t)\in \mathcal{A}_i$, $\forall t\geq 0$, where $\mathcal{A}_i
\subset \reals^{n_{\alpha_i}}$ is a compact set. 
Let the set of allowable states $\s_i(t)$ at time $t$ be defined as the 0-superlevel set of a function $h_i:\mathbb R^+\times\mathcal{X}\rightarrow \reals$ as in \eqref{eq::safeset}. Suppose $h_i$ has relative degree $r_i$ w.r.t the control input $u$ and define functions $\psi^k_i$ as: 
\begin{subequations}
    \eqn{
   \psi^0_i(t,x) &= h_i(t,x), \\
   \psi^k_i(t,x) &= \dot \psi^{k-1}_i(t,x,\Theta_{\alpha_i^k}) + \alpha^k_i (\psi^{k-1}_i(t,x,\Theta_{\alpha_i^k})),  \label{eq::rt_hobcf_barriers} \\
   & \quad \quad \quad \quad \quad k\in \{1,2,..,r_i-1\}.\nonumber
}
\end{subequations}

\begin{definition}(Rate-Tunable CBF) 
A (single) constraint function $h_i:\mathbb R^+ \times \mathcal X\rightarrow \reals$ is a Rate-Tunable Control Barrier Function (RT-CBF) for the set $\mathcal{S}_i(t)$ under the augmented system \eqref{eq::augmented} if for every initial state $x(0)\in \mathcal{S}_i(0)$, there exists  $\Theta_{\alpha_i}(0)\in \mathcal{A}_i$ 
such that
 
\eqn{
\sup_{u\in \mathcal{U}} \left[
  \dot \psi_i^{r_i-1}(t,x,u, \Theta_{\alpha_i}) + \alpha_i^{r_i} (\psi_i^{r_i-1}(t,x,\Theta_{\alpha_i})) 
  \right] & \geq 0, \nonumber \\
   \forall t\geq 0 &  
\label{eq::rtcbf_derivative_condition}
}
\label{definition::RT-CBF}
\end{definition}
Note that for $\dot \Theta_{\alpha_i}\equiv0$ and $\Theta_{\alpha_i}(0)\equiv \Theta_{\alpha_i}$ (a constant independent of $x(0)$), we recover the definition of the classical CBF. In that regard, RT-CBF is a weaker notion of a classical CBF, which allows for tuning the response of the system.

\begin{remark}
While several works employ heuristics to tune the parameters of the CBF condition \eqref{eq::cbf_derivative} so that a solution to the CBF-QP \eqref{eq::HOCBF-CLF-QP} exists for all $t>0$ \cite{zeng2021safety,wang2022ensuring,garg2019control}, most of these are equivalent to treating the parameter $\nu$ of a linear $\classK$ function $\alpha(h) = \nu h$ as an optimization variable. However, a formal analysis encompassing all these heuristics and other possible ways to adapt the $\classK$ function has been lacking so far, and RT-CBFs aim to bridge this gap in theory and application.
\end{remark}

\begin{remark}
\label{remark::rtcbf_Aset}
    Just like CBF condition in Definition \ref{definition::cbf_definition} assumes the existence of a suitable $\classK$ function $\alpha$ and does not provide a way to find it, Definition \ref{definition::RT-CBF} also does not provide a way to compute the set $\mathcal{A}$ for which we can find a RT-CBF. 
\end{remark}

We now show that the existence of a RT-CBF is a necessary and sufficient condition for safety. We first make the following assumption:
\begin{assumption}
    Every safe trajectory $x(t)$ of the system \eqref{eq::dynamics_general} evolves in a fixed compact set $\mathcal{B}\subset \reals^n$. 
\label{assumption::compactness_safe_trajectories}
\end{assumption}
 We consider this a weak assumption as in practice we do not want the states of our system to approach infinity or have a finite escape time. 

\begin{theorem}
\label{theorem::necessity-RT-CBF}
Consider the augmented system~(\ref{eq::augmented}), and the sets of allowable states $\s_i$ defined as 0-superlevel sets of a $C^{r}$ functions $h_i:\mathcal{X}\rightarrow \reals$ as in \eqref{eq::safeset3} for all $i\in \{1,..,N\}$.  
Then under assumption \ref{assumption::compactness_safe_trajectories}, the sets $\s_i$ are rendered forward invariant if and only if there exists control input trajectory $u(t): \reals^+\rightarrow\mathcal{U}$ such that $h_i$ are compatible RT-CBFs for all time $t\geq 0$.

\end{theorem}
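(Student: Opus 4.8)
The plan is to establish the two implications separately, treating sufficiency first. \emph{Sufficiency.} Suppose a control trajectory $u(t)$ renders the $h_i$ compatible RT-CBFs, so that along the closed-loop augmented system~\eqref{eq::augmented} the inequalities $\dot\psi_i^{r_i-1}(t,x,u,\Theta_{\alpha_i})\geq-\alpha_i^{r_i}(\psi_i^{r_i-1})$ hold for every $i$ and all $t\geq0$. I would first use Assumption~\ref{assumption::compactness_safe_trajectories} together with the local Lipschitzness of $f,g,f_{\alpha_i}$ to obtain a unique closed-loop solution without finite escape time on $[0,\infty)$. The core is a cascaded comparison argument: since $\mathcal{A}_i$ is compact, the time-varying map $s\mapsto\alpha_i^k(\Theta_{\alpha_i^k}(t),s)$ is dominated on $s\geq0$ by a fixed \classK function $\gamma_i^k$, so the autonomous comparison system $\dot c=-\gamma_i^k(c)$ is well posed and admits $c\equiv0$ as a solution. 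Applying the comparison lemma \cite{khalil2002nonlinear} to $\dot\psi_i^{r_i-1}\geq-\gamma_i^{r_i}(\psi_i^{r_i-1})$ with $\psi_i^{r_i-1}(0)\geq0$ gives $\psi_i^{r_i-1}(t)\geq0$; feeding this into~\eqref{eq::rt_hobcf_barriers} yields $\dot\psi_i^{r_i-2}\geq-\alpha_i^{r_i-1}(\psi_i^{r_i-2})$, and iterating down $k=r_i-1,\dots,0$ forces $\psi_i^0=h_i\geq0$, i.e.\ forward invariance of each $\s_i$. The nonnegativity of the initial derived barriers $\psi_i^k(0)$ needed to start the cascade is secured by the freedom to choose $\Theta_{\alpha_i}(0)$ in Definition~\ref{definition::RT-CBF} (by taking the initial gains large enough when $x(0)\in\Int(\s_i)(0)$).

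\emph{Necessity.} Conversely, forward invariance supplies a control trajectory $u(t)$ whose solution satisfies $x(t)\in\bigcap_i\s_i$, hence $b_i(t):=h_i(t,x(t))\geq0$ for all $t$. By Assumption~\ref{assumption::compactness_safe_trajectories} the trajectory remains in the compact set $\mathcal{B}$, and since $h_i\in C^{r}$ the derivatives $b_i,\dot b_i,\dots,b_i^{(r_i)}$ are uniformly bounded along $x(t)$. I would then build the $\alpha_i^k$ recursively from $\psi_i^0=h_i$, choosing at each level a \classK function (with constant parameters, so that $f_{\alpha_i}\equiv0$ and $\Theta_{\alpha_i}$ stays trivially in $\mathcal{A}_i$) for which $\psi_i^k(t)\geq0$. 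Because the same $u(t)$ already keeps every $b_i$ nonnegative, once all derived barriers are nonnegative the inequality in~\eqref{eq::rtcbf_derivative_condition} holds for this common input; hence the $N$ constraints are automatically compatible and each $h_i$ is an RT-CBF.

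The step I expect to be the main obstacle is the auxiliary lemma underpinning the recursive construction: if $\phi:[0,\infty)\to[0,\infty)$ is nonnegative with uniformly bounded second derivative $|\ddot\phi|\leq M$ — which $\phi=\psi_i^{k}$ inherits from the $C^{r}$ smoothness of $h_i$ on $\mathcal{B}$ — then there is a \classK function $\alpha$ with $\dot\phi(t)\geq-\alpha(\phi(t))$ for all $t$. The delicate issue is the behavior at the zeros of $\phi$, where one must preclude $\phi(t_n)\to0$ while $\dot\phi(t_n)\leq-c<0$; otherwise no $\alpha$ with $\alpha(0)=0$ can dominate the decrease. A second-order Taylor estimate $0\leq\phi(t_n+s)\leq\phi(t_n)+\dot\phi(t_n)s+\tfrac12 Ms^2$, minimized over $s$, forces $-\dot\phi(t_n)\leq\sqrt{2M\phi(t_n)}$ at every instant of decrease, so the decrease rate is dominated by the extended \classK function $s\mapsto\sqrt{2Ms}$ (regularized at the origin). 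Propagating this bound up the cascade and checking that the construction respects the admissible parametric family completes the argument; the content genuinely beyond classical converse-CBF statements is precisely that a \emph{single} safe input $u(t)$ must certify all $N$ barriers at once, which is what the compatibility requirement demands.
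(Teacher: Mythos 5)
Your sufficiency half is essentially the paper's own argument: dominate the time-varying $\alpha_i^k$ by a fixed \classK function $\gamma_i^k$ using compactness of $\mathcal{A}_i$, then run a comparison-lemma cascade down from $\psi_i^{r_i-1}$ to $h_i$; that part is fine, and your remark that $\Theta_{\alpha_i}(0)$ can be chosen to make the initial derived barriers nonnegative is a detail the paper skips (though it only works for $x(0)\in \Int(\s_i)(0)$). The necessity half is where you genuinely diverge: the paper never uses a curvature bound; it defines the dominating function as an infimum of the realized derivatives over sublevel sets along the given safe trajectory, $\alpha_i'^{k+1}(r)=-\inf\{\dot\psi_i^k(t): 0\leq \cdot \leq r\}$, argues the infimum exists via Assumption~\ref{assumption::compactness_safe_trajectories}, and upper-bounds this nondecreasing function by a \classK function. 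Your Taylor-estimate mechanism is attractive because it explains \emph{why} the decrease rate must vanish at the boundary, which the paper merely asserts, but it has a gap exactly where it matters most. The curvature lemma needs $|\ddot\phi|\leq M$, and this is unavailable at the top level $k=r_i$, the level at which the RT-CBF inequality \eqref{eq::rtcbf_derivative_condition} actually lives (and the only level when $r_i=1$). For $\phi(t)=\psi_i^{r_i-1}(t,x(t))$, the derivative $\dot\phi$ already contains $u(t)$ (that is what relative degree $r_i$ means), so $\ddot\phi$ contains $\dot u(t)$; an admissible safe input trajectory is merely measurable or piecewise continuous and need not have a bounded derivative, or any derivative at all. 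Your claim that $b_i,\dot b_i,\dots,b_i^{(r_i)}$ are uniformly bounded is the most one can extract (and already requires $u$ bounded, which the theorem does not assume); the lemma needs one order more. The failure is not cosmetic: take $\dot x=u$, $h=x$, $\mathcal{U}=[-1,1]$, and a smooth safe trajectory that repeatedly dips to $x(t_n)=1/n$ with $\dot x(t_n)=-1$ before turning around while staying positive. Then $h(t_n)\to 0$ with $\dot h(t_n)=-1$, so no \classK function dominates the realized decrease, and your concluding step --- certifying \eqref{eq::rtcbf_derivative_condition} ``for this common input,'' i.e., with the trajectory's own $u(t)$ --- is false here; compatibility at $t_n$ must be certified with a \emph{different} input (e.g., $u=+1$), which your construction never produces.

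A second, related gap: the square-root bound cannot be ``regularized at the origin'' in the way your plan requires. Any $C^1$ \classK function satisfies $\alpha(s)=O(s)$ as $s\to 0^+$ and therefore cannot dominate $\sqrt{2Ms}$ near $0$ (the bound $-\dot\phi\leq\sqrt{2M\phi}$ is tight, e.g.\ for $\phi=\tfrac{M}{2}(t-1)^2$), so you are forced to keep an $\alpha_i^k$ that is non-Lipschitz at $0$. But then $\psi_i^k=\dot\psi_i^{k-1}+\alpha_i^k(\psi_i^{k-1})$ is in general only Lipschitz, not differentiable, at instants where $\psi_i^{k-1}=0$, so the next derived barrier in \eqref{eq::rt_hobcf_barriers} is not defined in the classical sense, and ``propagating the bound up the cascade'' does not go through for $r_i\geq 2$ without reworking the construction with Dini derivatives or an almost-everywhere argument. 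So the proposal needs both a separate treatment of the top level (where the paper's infimum construction, whatever its own rigor issues, at least avoids second derivatives) and a resolution of the non-smooth composition before it can stand as a proof.
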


\begin{proof}
    The proof for sufficiency is trivial and is shown first. Given that $h_i$ are RT-CBF, we know from \eqref{eq::rtcbf_derivative_condition} that
    \eqn{
    \dot{\psi}_i^{k-1} \geq -\alpha_i^{k-1} (\psi_i), \forall i \in \{1,..,N\}.
    }
    Since the sets $\mathcal{A}_i$ are compact, there exist \classK functions $\gamma_i^k: \reals\rightarrow \reals$ such that $-\alpha^k_i(\psi_i^k(t,s))\geq -\gamma_i^k(\psi_i^k(t,s)), \forall (t,s)\in \reals^+ \times \reals$. Note that the function $\gamma_i^k$ may be the same parametric function as $\alpha_i^k$ but its parameters are not constrained to lie in $\mathcal{A}_i$. Now consider the following comparison system
    \eqn{
        c_i^{k} = \dot c_i^{k-1} &+ \gamma_i^{k}c_i^{k-1}, k\in \{1,..,r_i-1\} \\
        \begin{bmatrix}
            \dot c^0_i \\
             \dot c^1_i \\
            ... \\
            \dot c^{r_i-1}_i
        \end{bmatrix} & \geq \begin{bmatrix}
            -\gamma^{1}_i(c_i^0)  \\
            -\gamma^{2}_i(c_i^1)  \\
            ... \\
            -\gamma^{r_i}_i(c_i^{r_i-1})
        \end{bmatrix}
        \label{eq::comparison_system_rtcbf}
    }
    Since a control input exists such that $\dot c^{r_i-1}_i\geq -\gamma_i^k(\psi_i^k), \forall i \in \{1,..,N\}$, from \cite[Theorem 3]{xiao2021high}, we have that $c_i^{k_i}\geq 0,  \forall k_i\in \{1,..,r_i\}$. A recursive application of comparison lemma \cite[Lemma 3.4]{khalil2002nonlinear} starting from $\psi_i^{k-1}$ gives us that $\psi_i^{k_i}\geq 0,  \forall k_i \in \{1,..,r_i\}$. Therefore, $h_i(t,x)\geq 0 ~ \forall t$, and thus $x(t)\in \mathcal{S}_i \forall i\in \{1,..,N\}$. 
    
    Next, we show the necessity of RT-CBF. Without loss of generality, let $x'(t)$ be any arbitrary safe trajectory of \eqref{eq::dynamics_general}. Given that $x'(t)$ is a safe trajectory, we have $h_i(t,x')\geq 0$ for all $t\geq 0$, $i\in \{1,..,N\}$. Since $h_i$ is $C^{r_i+1}$ function, $\dot h_i$ is a $C^{r_i}$ function as $\dot{h}_i = \frac{\partial h_i}{\partial x}|_{x=x'}\dot x' + \frac{\partial h_i}{\partial t}$. Now consider 
        \eqn{
             \alpha_i^{'1}(r) = - \inf_{t>0, 0 \leq h_i(t,x')\leq r} L_f h_i(t,x') 
             \label{eq::construct_alpha}
        }
Since $x'$ evolves in a compact set under Assumption \ref{assumption::compactness_safe_trajectories}, the set $\{x ~| ~h_i(t,x) \leq r, \forall t>0\}$ is also compact. Hence, the infimum in \eqref{eq::construct_alpha} exists as $h_i$ is a $C^{r_i+1}$ function. Therefore, $\alpha_i^{1'}$ is a non-decreasing $C^{r_i}$ function. 
        There always exist a $C^{r_i} ~ \classK$ function $\hat \alpha_i^1(r)$ that upper bounds $\alpha_i^{'1}(r)$ yielding $\dot h_i \geq -\hat \alpha_i^1(h_i)$.

        Now consider the higher-order derived barrier functions  $\psi_i^k, k\in \{0,..,r_i-1\}$ and consider 
        \eqn{
          \alpha'^{k+1}_i(r) &= - \inf_{t>0, 0\leq h_i(t,x)\leq r} \dot \psi_i^k(t,x',\dot x', \alpha^{'1}_i,..,\alpha^{'1(k)}_i, ... ,\nonumber \\
          & \hspace{4cm} \alpha^{'k}_i, ,..,\alpha^{'k(1)}_i)
        }
        where $\dot \psi_i^{k+1}$ includes all $k+1-j$ time derivatives of functions $\alpha_i^j, j\in \{1,..,k\}$. Then, following similar arguments, we have that $\alpha'^{k+1}_i(r)$ is non-decreasing $C^{r_i-k}$ function. Therefore, there always exists a $C^{r_i-k} ~ \classK$ function $\hat \alpha^1_i(r)$ that upper bounds $\alpha'^{k+1}_i(r)$ such that $\dot \psi_i^k \geq - \hat \alpha^1_i(r)$.
        Since for any given initial state $x'(0)$ and a safe trajectory $x'(t)$, there exists an augmented system \eqref{eq::augmented} such that $h_i$ is a RT-CBF, we conclude that RT-CBF is a necessary condition for safety.

\end{proof}

Note that in contrast to the necessity proof in \cite{ames2016control} for first-order CBFs, we have not assumed that the barriers $h_i$ define a compact set. This is also a result of RT-CBF being a more general notion of safety as CBFs are a subclass of RT-CBF.
The new notion of RT-CBFs is thus more expressive than CBFs with fixed $\classK$ function. It is also trivial to show that even linear $\classK$ functions of the form \eqref{eq::linear_classK_example} with time-varying parameter $\nu$ are equivalent to any fixed nonlinear sufficiently differentiable $\classK$ function. 

The existence of a CBF is shown to be necessary for safety under some conditions\cite{ames2016control}. However, a CBF-QP controller may not be able to result in every possible safe trajectory, that is, it is not complete, and would also require tuning like any other controller. The following theorem illustrates how the tuning of parameter $\Theta$ can be used to shape the response of the CBF-QP controller. The response shaping is essential for increasing either the aggressiveness or conservatives of the system at a given state $x$.

\begin{theorem}
\label{theorem::alpha_minimum_norm}
    Consider the system \eqref{eq::dynamics_general}, a first-order candidate barrier function $h$, a function $\alpha(h,x): \reals\times\reals^n \rightarrow \reals$ and the following CBF-QP controller with unbounded control input
    \begin{subequations}
    \eqn{
        u_{QP} = \min_{u\in \reals^m} \quad &  ||u-u_{r}(t,x)||^2 \\
        \textrm{s.t.} \quad &  \dot h(t,x,u) \geq -\alpha(x,h(t,x))
        }
        \label{eq::cbf_qp_simple}
\end{subequations}
where $u_{r}: \reals^+ \times \reals^n\rightarrow $ is the reference (nominal) control input. Let $u_d(t,x):\reals^+ \times \reals^n\rightarrow \mathcal{U}$ be any desired safe response of the system and $u_r\neq u_d$ w.l.o.g. Then the following choice of function $\alpha(\cdot, \cdot)$ minimizes the norm $||u_d - u_{QP}||$ 
\eqn{
\alpha(x,h) \!= \!\left\{  
      \begin{array}{ll}
         \!\sqrt{2}\frac{L_gh}{||L_gh||}(u_r\! -\! u_d)  &  \textrm{if } ||L_gh||>0\\ 
         \!\quad  - L_fh - L_ghu_r & \textrm{     and } L_gh u_d\!>\!L_gh u_r\\ \\
         \! - L_fh - L_ghu_r & \textrm{if } ||L_gh||=0 \\ \\
          \!\sqrt{2}\frac{L_gh}{||L_gh||}(u_r\! -\! u')  &  \textrm{if } ||L_gh||>0 \\
          \!\quad   - L_fh - L_ghu_r &
          \textrm{     and } L_gh u_d\!<\!L_gh u_r
      \end{array}
\right.   
\label{eq::alpha_minimum_norm}
}
where 
\eqn{
u' = u_d + \frac{1}{\sqrt{2}}L_gh^T L_gh (u_r-u_d).
\label{eq::u_special_case}
}

\end{theorem}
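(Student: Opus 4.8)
The plan is to reduce the statement to the elementary geometry of a single affine inequality and then to invert the resulting map $\alpha\mapsto u_{QP}$. Since $h$ has relative degree one, the constraint in \eqref{eq::cbf_qp_simple} is affine in $u$, namely $L_gh\,u \ge -(L_fh+\alpha)$, so \eqref{eq::cbf_qp_simple} is exactly the Euclidean projection of $u_r$ onto a half-space. Writing the KKT conditions (or quoting the standard closed form for a QP with one affine constraint), I would record that $u_{QP}=u_r$ whenever $u_r$ is already feasible, i.e.\ $L_fh+L_ghu_r+\alpha\ge 0$, and that otherwise the constraint is active with $u_{QP}=u_r-\frac{L_fh+L_ghu_r+\alpha}{\|L_gh\|^2}(L_gh)^T$, which is $u_r$ plus a \emph{nonnegative} multiple of $(L_gh)^T$.

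Next I would characterize the reachable set of QP solutions as the knob $\alpha$ sweeps over $\reals$. When $\|L_gh\|>0$, decreasing $\alpha$ below the threshold $-(L_fh+L_ghu_r)$ makes the active-constraint coefficient $-\frac{L_fh+L_ghu_r+\alpha}{\|L_gh\|^2}$ increase continuously from $0$ to $+\infty$, so the set $\{u_{QP}(\alpha):\alpha\in\reals\}$ is precisely the closed ray $R=\{u_r+t\,(L_gh)^T:\,t\ge 0\}$ from $u_r$ in the direction $(L_gh)^T$. When $\|L_gh\|=0$ the constraint is independent of $u$, so $u_{QP}=u_r$ for every feasible $\alpha$ and $R$ degenerates to the single point $u_r$. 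Hence minimizing $\|u_d-u_{QP}\|$ over admissible $\alpha$ is equivalent to minimizing $\|u_d-w\|$ over $w\in R$, i.e.\ to projecting $u_d$ onto the ray $R$.

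I would then execute the projection with a case split governed by the sign of $\langle (L_gh)^T,\,u_d-u_r\rangle = L_gh\,u_d-L_gh\,u_r$, matching the branches of \eqref{eq::alpha_minimum_norm}. In the branch $L_ghu_d>L_ghu_r$ the orthogonal projection lands in the interior of $R$ at parameter $t^\star=\frac{L_gh(u_d-u_r)}{\|L_gh\|^2}>0$; equating $t^\star$ with the active-constraint coefficient gives a scalar equation whose solution is the claimed $\alpha$. In the branch $L_ghu_d<L_ghu_r$ the projection falls behind the vertex, so the closest point of $R$ to $u_d$ is the endpoint $u_r$; here the auxiliary point $u'$ of \eqref{eq::u_special_case} plays the role of a surrogate target whose projection onto the supporting line is forced to the vertex, and back-substitution again yields \eqref{eq::alpha_minimum_norm}. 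The degenerate branch $\|L_gh\|=0$ is immediate, since every $\alpha\ge -(L_fh+L_ghu_r)$ is optimal and the stated value is simply the boundary choice keeping the constraint tight.

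The main obstacle I anticipate lies entirely in the inversion step and the endpoint branch rather than in the geometry. The map $\alpha\mapsto u_{QP}$ is only piecewise smooth, with a kink at the feasibility threshold, so I must verify that the optimal $t^\star$ genuinely falls in the active regime before solving the scalar equation there, and that the surrogate construction $u'$ reproduces the \emph{exact} algebraic form of \eqref{eq::alpha_minimum_norm}, including the normalization $\tfrac{L_gh}{\|L_gh\|}$ and the constant $\sqrt{2}$ --- this constant bookkeeping is the most delicate part and the place where a clean recomputation of $\alpha$ should be checked against the stated formula. I would also confirm that the projection-behind-vertex case is consistent with $u_d$ being a genuinely \emph{safe} response, since feasibility of $u_d$ may restrict the sign of $L_gh(u_d-u_r)$ and thereby which branch is actually attainable.
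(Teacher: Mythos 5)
Your reduction of \eqref{eq::cbf_qp_simple} to a half-space projection, and your characterization of the attainable QP solutions as the ray $R=\{u_r+t\,L_gh^T : t\ge 0\}$ (degenerating to $\{u_r\}$ when $\|L_gh\|=0$), is sound, and it is a cleaner scaffolding than the paper's own proof: the paper uses the same two ingredients (a closed-form QP solution plus a case split on $\|L_gh\|$ and on the sign of $L_gh(u_d-u_r)$), but never states explicitly that optimizing over $\alpha$ is equivalent to projecting $u_d$ onto $R$, which is what makes optimality over \emph{all} admissible $\alpha$ immediate. Your handling of the behind-the-vertex branch is also more transparent than the paper's: there the best attainable point is $u_r$ itself, and indeed the paper's detour through the auxiliary point $u'$ of \eqref{eq::u_special_case} collapses to exactly that (a correctly computed $u'$ lies on the hyperplane $L_gh\,u=L_gh\,u_r$, so the branch-three $\alpha$ reduces to $-L_fh-L_ghu_r$, the value making $u_r$ exactly feasible).

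The genuine gap is in the inversion step, precisely where you flagged the ``constant bookkeeping.'' You quote the standard projection formula: when $u_r$ is infeasible, $u_{QP}=u_r-\frac{L_fh+L_ghu_r+\alpha}{\|L_gh\|^2}\,L_gh^T$. Carrying out your branch-one inversion with this formula gives $\alpha=-L_fh-L_ghu_d$, with no $\sqrt{2}$ and no $\|L_gh\|$ normalization; this equals the first branch of \eqref{eq::alpha_minimum_norm} only in the special case $\|L_gh\|=\sqrt{2}$. The paper's stated constants are instead calibrated to its own closed form from the appendix, $u_{QP}=u_r-\frac{1}{\sqrt{2}}\frac{L_gh^T}{\|L_gh\|}\left(\frac{\partial h}{\partial t}+L_fh+L_ghu_r+\alpha\right)$ in the active case, which disagrees with the standard projection (in the appendix's minimum-norm step the divisor is $\|A\sqrt{Q}^{-1}\|$ where the projection requires $\|A\sqrt{Q}^{-1}\|^2$). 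Concretely, with $L_fh=0$, $L_gh=1$, $u_r=0$, $u_d=3$, the stated formula gives $\alpha=-3\sqrt{2}$ and hence $u_{QP}=3\sqrt{2}\neq u_d$, while $\alpha=-3$ gives $u_{QP}=u_d$. So your sentence that back-substitution ``again yields \eqref{eq::alpha_minimum_norm}'' is the unproved step, and under your own (correct) closed form it is false: you must either adopt the paper's closed form, inheriting its constant-factor discrepancy, or conclude that the constants in \eqref{eq::alpha_minimum_norm} do not survive a correct computation. As written, the proposal establishes the right geometric picture but does not prove the statement as stated.
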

\begin{proof}

    The solution of the optimization \eqref{eq::cbf_qp_simple}, assuming it exists, can be found analytically (see Appendix \ref{appendix::analytical_qp_soln}) to be \eqn{
u_{QP} &= u_r + \nonumber \\
    &\left\{ 
    \begin{array}{ll}
        \frac{1}{\sqrt{2}}\frac{L_gh^T}{||L_gh||}\left( \frac{\partial h}{\partial t}\! +\! L_fh\!+\!L_gh u_r\!+\! \alpha(x,h)\right) & \\
        \quad  \quad  \quad \quad \quad\quad\quad\quad\quad ~\textrm{if } \psi^1 < 0 \textrm{ and } ||L_gh||>0 & \\
        0 \quad \quad  \quad \quad \quad\quad\quad\quad\quad \textrm{if } \psi^1 \geq 0 \textrm{ or }  ||L_gh||=0
    \end{array}
\right.
}
where $\psi^1 = \frac{\partial h}{\partial t}L_f(x)h(t,x)+L_g(x)h(t,x)u_r+\alpha(x,h)$. 

\noindent \textbf{Case 1:} Suppose $\psi^1<0$, $L_gh u_d > L_gh u^r$, and $||L_gh||>0$
\eqn{
u_d - u_{QP} &= u_d - u_r  + \nonumber \\
& \quad \frac{1}{\sqrt{2}} \frac{L_gh^T}{||L_gh||}\left(\frac{\partial h}{\partial t}\!+\!L_fh\!+\!L_gh u_d\!+\! \alpha(x,h)\right)
\label{eq::thm3_diff_equation}
}
Note that $u_{QP}=u_d$ is possible only when $u_d-u_r$ belongs to the span of vector $L_gh^T$. Since $\alpha(x,h)$ only affects the component parallel to $L_gh^T$, multiplying both sides by $\frac{L_gh^T}{||L_gh||}$ gives us
\eqn{
  \frac{L_gh}{||L_gh||}(u_r-u_{QP}) &= \frac{L_gh^T}{||L_gh||}(u_d-u_r) + \nonumber \\
  & \frac{1}{\sqrt{2}}\frac{\partial h}{\partial t}\! +\! L_fh\!+\!L_gh u_r\!+\! \alpha(x,h)
}
The norm $||u_d-u_{QP}||$ is thus minimized when \eqref{eq::alpha_minimum_norm} holds.
\noindent \textbf{Case 2: } $||L_gh||=0$: 
Note that \eqref{eq::alpha_minimum_norm} ensures that a solution to \eqref{eq::cbf_qp_simple} exists whenever $||L_gh||=0$. Moreover, the only possible solution is $u_{QP}=u_r$.

\noindent \textbf{Case 3: }Whenever $L_gh u_r > L_gh u_d$, $u_d$ cannot be a minimizer of \eqref{eq::cbf_qp_simple} irrespective of the choice of function $\alpha$. However, there may exist a vector $u'\in \mathcal{U}$ such that 
\eqn{
||u'-u_d|| < ||u_r-u_d|| \textrm{ and } L_gh u' > L_gh u_r
}
$u'$ can thus be found using the following QP
\begin{subequations}
  \eqn{
\min_{u'} \quad & ||u'-u_d ||^2 \\
\textrm{s.t.} \quad & L_gh u'\geq L_gh u_r
}    
\end{subequations}
The solution of above QP can also be found analytically to be given by \eqref{eq::u_special_case}. Following the same reasoning as in Case 1, we can design $\alpha$ so that $u_{QP}=u'$ and this function minimizes $||u_{QP}-u_d||$.

\end{proof}

The simple result in Theorem \ref{theorem::alpha_minimum_norm} gives us some important insights. First, for different desired responses $u_d$ (such as conservative or aggressive) at state $x$ and time $t$, the function $\alpha$ can be used to steer $u_{QP}$ close to $u_d$. Second, to achieve the aforementioned steering, the function $\alpha$ cannot be just a $\classK$ of the barrier function $h$ as \eqref{eq::alpha_minimum_norm} depends not only on $h$ but also on $x$. In our framework of RT-CBF, the parameter $\Theta$ is a function of $t,x,h$ and thus can fulfill this objective at the points where $\alpha$ in \eqref{eq::alpha_minimum_norm} is differentiable. 
For example, for the linear \classK in \eqref{eq::linear_classK_example}, the following parameter value always satisfies the above equation when $h>0, ||L_gh||>0$
\eqn{
\nu(t,h,x) = \frac{1}{h} \left( \sqrt{2}\frac{L_gh}{||L_gh||}(u_r - u_d) -  \frac{\partial h}{\partial t}+L_fh + L_ghu_d \right)
}
\begin{remark}
    The barrier function in Theorem \ref{theorem::alpha_minimum_norm} could be the constraint function as in RT-CBF or it could be a valid CBF. In either case, tuning parameters of \classK function helps shape response of the controller. Note that \eqref{eq::thm3_diff_equation} suggests that an alternative and definitive way to enforce $u_{QP}=u_d$ would be to change the function $h$ instead of the function $\alpha()$. However, this would require designing $h$ not only to encapsulate safety but also performance objectives. While not impossible, we leave the comparison of these two different ways for future work. In contrast to works that learn barrier function $h$ for a specific task and a predefined function $\alpha$\cite{dawson2022safe,qin2021learning,robey2020learning}, the objective of this work is to explore the advantages of adapting the \classK function as done in \cite{ma2022learning,parwana2022recursive,zeng2021enhancing}.
\end{remark}

\begin{remark}
    In addition to our previous works\cite{parwana2022recursive}\cite{parwana2022trust}, RT-CBF may strike a remarkable resemblance to Parameter Adaptive CBF (PACBF) introduced in \cite{xiao2021adaptive}. The two formulations are closely related and here we point out the differences. PACBF considered a single barrier function while we consider multiple barriers simultaneously. PACBF multiplies a scalar parameter to a generic fixed \classK function whereas we allow for any arbitrary parametric \classK function. PACBF also considers only $r_i-k$ derivatives of the parameters of function $\alpha_i^k$ whereas we consider $r_i-k+1$ derivatives. That is, for a second order barrier function and linear \classK functions in \eqref{eq::linear_classK_example}, PACBF only designs $\dot \nu_i^1$ and, similar to works in \cite{zeng2021enhancing, garg2019control, garg2022control}, allows $\nu_i^2$ to be an optimization variable in QP in the simulation results. Whereas we design $\ddot \nu_i^1$ and $\nu_i^2$. As a consequence, since $\dot \psi_i^1$ is dependent on $\dot \nu_i^1$, PACBF simultaneously optimizes for the control input $u$ and the derivative $\dot \nu_i^1$ in \eqref{eq::RT-HOCBF-CLF-QP}. In contrast, in our case, \eqref{eq::RT-HOCBF-CLF-QP} is independent of $\ddot \nu_i^1$ and $\dot \nu_i^2$ and hence is optimized only for $u$. In Section \ref{section::dynamics_design}, we illustrate that some of the existing methods fall under the framework of RT-CBFs. Not all of them however cannot be expressed as PACBFs. Moreover, in addition to feasibility, we also emphasize the performance of the closed-loop response of the system. For this purpose, it becomes essential to also vary the parameters $\nu_i^{r_i}$ as implied by the Theorem \ref{theorem::alpha_minimum_norm} for $r_i=1$ and also seen in our simulation results in Section \ref{section::simulation}. Finally, the conclusion in PACBF of the existence of a small enough \classK parameter\cite[Theorem 5]{xiao2021high} that leads to feasibility will not necessarily hold for time-varying barrier functions that we consider as the constraints may not allow the dynamical system to be conservatively confined to a small subset of state space.
\end{remark}

In the remainder, we consider the following CBF-QP controller with parametric \classK functions
\begin{subequations}
        \begin{align}
     u = \arg \min_{u\in \mathcal{U}} \quad &  ||u-u_r||^2 + M \delta^2 \\
         \textrm{s.t.} \quad & \dot{V}(t,x,u) \leq - k V(t,x) + \delta, \label{eq::rt_qp_hoclf_constraint} \\
           \dot \psi_i^{r_i-1}(t,x,u, \Theta_{\alpha_i}) &+ \alpha_i^{r_i} (\psi_i^{r_i-1}(t,x,\Theta_{\alpha_i})) \geq 0, \\ 
          & \quad \quad \quad \quad i\in \{1,2,..,N\}\nonumber. \label{eq::qp_rt_hocbf_constraint} \;
        \end{align}
        \label{eq::RT-HOCBF-CLF-QP}
\end{subequations}
While we have established the necessity of RT-CBFs, finding a valid update law $\dot \Theta$ for parameters, much like finding a valid CBF in the sense of \eqref{eq::cbf_derivative}, is non-trivial. The next section is devoted to designing some suboptimal and heuristic methods for ensuring that \eqref{eq::RT-HOCBF-CLF-QP} admits a solution for all time.

\section{Design of $\classK$ function dynamics}
\label{section::dynamics_design}

In this section, we present some algorithms for designing RT-CBFs. Before proceeding further, we illustrate that some of the existing works on adapting \classK function parameters fall under the framework of RT-CBFs. 
\begin{itemize}
    \item The work in \cite{ma2022learning} performs offline learning to model \classK function as a neural network whose output is not only dependent on $h$ but also on the initial state. Their objective is to improve the performance, defined as summation of stage-wise costs over a time horizon. Their framework, like RT-CBFs, have initial parameter value $\Theta(0)$ dependent on the initial state, however $\dot \Theta=0$.
    \item In our previous work \cite{parwana2022recursive}, we perform online optimization of parameters $\Theta$ with a predictor-corrector method for a discrete-time system by posing the system as a differentiable graph whose nodes represent states and inputs. With control input nodes designed as using differentiable QP layers\cite{diamond2016cvxpy,agrawal2019differentiating}, we perform backpropagation is used to compute gradients of future predicted states w.r.t $\Theta$. These gradients are then used to update $\Theta$ to either improve the future performance or feasibility of QPs in case they are infeasible. The initial $\Theta$ is given by the user but $\dot \Theta$ is given by the backpropagation scheme. 
    \item The works in \cite{zeng2021enhancing, garg2019control, garg2022control} pose $\nu$ in the linear \classK function \eqref{eq::linear_classK_example} as an optimization variable in the following CBF-QP
    \begin{subequations}
        \eqn{
    u(t), \nu_i^*(t) \!=\! \min_{u, \nu_i} & ~ ||u-u^r||^2 \!+\! M \sum_{i=1}^N (\nu_i-\nu_{i,r})^2\\
    \textrm{s.t.} &~ \dot h_i(t,x,u) + \delta_i \geq -\nu_i h_i , \nonumber \\
    & \quad \quad \quad \quad \forall i\in \{1,..,N\}
    }
    \label{eq::cbf_nu_qp}
    \end{subequations}
    
    where $\nu_{i,r}$ is a nominal $\nu_i$ value and $M>>1$ is used to promote $\nu_i=\nu_{i,r}$ whenever possible. Note that the aforementioned works only employed first-order CBFs. Assuming a sampled-data system with sampling time $\Delta t$, the above method is equivalent to following the update law
    \eqn{
    \dot \nu_i(t) = \frac{\nu_i^*(t+\Delta t) - \nu_i(t)}{\Delta t},
    }
    where $\nu_i(t)=\nu_i^*(t)$, and $\nu_i(t+\Delta t)$ can be computed by predicting $x(t+\Delta t)$ whenever the dynamics is known.
\end{itemize}

In the subsequent, inspired by aforementioned works, we devise a pointwise sufficient condition for higher-order barrier functions and provide a lower bound on $\dot\Theta_{\alpha_i}:\reals^n \times \reals^{n_\alpha}\rightarrow \reals^{n_\alpha}$ for enforcing persistent feasibility. Finally, we present a case study on decentralized multi-robot control that showcases the potential of RT-CBFs in reducing conservativeness in the responses of the agents while enforcing safety requirements. 

\subsection{Strategy for designing PointWise sufficient condition}
\label{section::suboptimal_parameter_dynamics}
Algorithm \ref{algo::rt_cbf_point_optimal} presents an update law that minimally modifies $\Theta$ to ensure the feasibility of the CBF-QP controller. 

\textbf{Case 1 } $k\in \{1,..,r_i\}$: 
At a given time $t$ and state $x(t)$, the following equations much be satisfied
\eqn{
&\dot \psi_i^{k-1}(t,x) + \nu_i^k \psi_i^{k-1}(t,x) \geq 0 \\
&\implies \nu_i^k \geq -\frac{\dot\psi_i^{k-1}(t,x)}{\psi_i^{k-1}(t,x)}
\label{eq::nu_trajectory}
}

 Since we differentiate $\nu_i^k$ $r_i-k+1$ times in our framework, we design $(\nu_i^{k})^{r_i-k+1}$ to ensure the constraint  \eqref{eq::nu_trajectory} is satisfied at all times. Note that this could be done, for example, by posing $\nu_i^k$ as a CBF as done in \cite{xiao2021adaptive}, but our framework is independent of the exact method chosen. Below, we present a simple method for sampled data systems that avoids designing a new CBF function for $\nu_i^k$. 

\subsubsection*{Application in Sampled-Data systems}

For a sampling time $\Delta t$, we predict the future state $x$ at time $t+\Delta t$ using dynamics \eqref{eq::dynamics_general} and control input from \eqref{eq::RT-HOCBF-CLF-QP}. Given a desired $\nu_{i,d}^k(t+\Delta t)\geq0$ by the user, $\nu_i^k(t+\Delta t)$ is chosen as
\eqn{
  \nu_i^k(t\!+\!\Delta t) \!=\! \max \left(  -k\frac{\dot \psi_i^{k-1}(t\!+\!\Delta t)}{\psi_i^{k-1}(t\!+\!\Delta t, x(t\!+\!\Delta t))}, \nu_{i,d}^k(t\!+\!\Delta t)  \right)
  \label{eq::sampled_data_rule}
}
where $k\geq 1$ is a tuning parameter to avoid satisfying the constraint \eqref{eq::nu_trajectory} at the boundary. Choosing $k=1$ may lead to a very conservative set of allowed initial states for which the $h_i$ are found to be persistently compatible. 

Next, since the $\nu_i^k$ dynamics is given a simple integrator model, its analytical solution is known to be
\eqn{
\nu_i^k(t+\Delta t) = & \nu_i^k(t) + (\nu_i^k(t))^{(1)} * \Delta t + (\nu_i^k(t))^{(2)} \frac{\Delta t^2}{2} + ... \nonumber \\
& + (\nu_i^k(t))^{(r_i-k+1)}\frac{\Delta t^{(r_i-k+1)}}{(r_i-k+1)!}
}
This motivates the following design of $(\nu_i^k)^{r_i-k+1}$ 
\eqn{
  (\nu_i^k(t))^{(r_i-k+1)} & = \frac{(r_i-k+1)!}{\Delta t^{(r_i-k+1)}} \Biggl( \nu_i^k(t+\Delta t) - \nu_i^k(t) -  \nonumber \\
  & \quad \quad \left. \sum_{p=1}^{r_i-k}(\nu_i^k(t))^{(p)} * \frac{\Delta t^p}{p!}\right)
  \label{eq::nu_final_derivative}
}

\textbf{Case 2 $k=r_i$}: $\nu_i^{r_i}$ appears in \eqref{eq::qp_rt_hocbf_constraint} and is thus dependent on the chosen control input. However, when there is a single barrier constraint, a lower bound can still be obtained by considering the worst-case scenario
\eqn{
\nu_i^k \geq -\frac{\sup_{u\in \mathcal{U}} \dot \psi_i^{r_i-1}(t,x,u,\Theta)}{\psi_i^{r_i-1}(t,x,\Theta)}
\label{eq::nu_final_lower_bound}
}
In case of multiple constraints, assuming all constraints are equally important, the lower bounds of $\nu_i^k$ are solutions to the following linear program
\begin{subequations}
  \eqn{
    \min_{\nu_i^k, u\in \mathcal{U}} \quad & \sum_{i=1}^{N} \nu_i^k\\
    \textrm{s.t.} \quad & \dot \psi_i^{r_i-1} \geq -\nu_i^{r_i}\psi_i^{r_i-1} \\
    & \nu_i^k \geq 0, i\in \{1,..,N\}
}    
\label{eq::nu_final_qp}
\end{subequations}

With the bound in \eqref{eq::nu_final_lower_bound}, the design of $\dot\nu_i^{r_i}$ is the same as in Case 1. Note that for sample-data systems, the optimization \eqref{eq::nu_final_qp} is solved for the states the time $t+\Delta t$ to first find desired $\nu_i^{r_i}$

\begin{example}
    Consider a second-order barrier function $h$ with input bounds $\mathcal{U}$ imposed on the system \eqref{eq::dynamics_general}. The first two derived barrier functions are given by
    \begin{subequations}
        \eqn{
    \psi^0 &= h \label{eq::example_h0}\\
    \psi^1 &= \dot h + \nu^1 h \label{eq::example_h1}\\
    \psi^2 &= \dot \psi^1 + \nu^2 \psi^1 = \ddot h + \nu^1 \dot h + \dot \nu^1 h + \nu^2 (\dot h + \nu^1 h) \label{eq::example_h2}
    }
    \end{subequations}
    
    From \eqref{eq::example_h1}, we have following bound on $\nu^1$
    \eqn{
    \nu^1 \geq -\frac{\dot h}{h}
    }
    Finally, the constraint on $\nu^2$ is obtained as follows
    \eqn{
    \nu^2 \geq -\frac{\sup_{u\in \mathcal{U}}\ddot h(t,x,u)-\nu^1\dot h - \dot \nu^1 h}{\dot h + \nu^1 h}
    }
    
\end{example}

\begin{algorithm}[h!]
\caption{RT-CBF Pointwise Optimal Controller}
\begin{algorithmic}[1]
\Require $t, x(t), N$ \Comment{current time, current state, No. of constraints}
    \For {i=1 to N} \Comment{Compute all derived barriers}
    \State Compute $\dot \psi_i^{r_i-1}, \psi_i^{r_i}$
    \EndFor
    \State Compute $u(t)$ using \eqref{eq::RT-HOCBF-CLF-QP}\Comment{Implement $u$ on system \eqref{eq::dynamics_general}}
    \State Compute $x(t+\Delta t)$ using $u(t)$ and \eqref{eq::dynamics_general}
    \For {i = 1 to N} \Comment{Update $\classK$ parameters}
        \For {$k=1$ to $r_i-1$} \Comment{for all derived barriers}
            \State Compute $\nu_{i}^{k}$ and its derivatives using \eqref{eq::nu_trajectory},\eqref{eq::nu_final_derivative},\eqref{eq::nu_final_lower_bound}
        \EndFor
        
    \EndFor
    \State Compute $\Theta (t+\Delta t)$ \Comment{Update parameters with designed update law}
\end{algorithmic}
\label{algo::rt_cbf_point_optimal}
\end{algorithm}

\section{A Synthesis Case Study: Designing RT-CBFs For Multi-Robot Systems}
\label{section::multi-agent}
In this section, as an application case study, we utilize RT-CBFs and propose $\classK$ function parameter dynamics for decentralized multi-robot control based on our work in \cite{parwana2022trust}. We show through simulations that our online parameter adaptation scheme leads to a less conservative response by quantifying the ease of satisfaction of CBF constraints. 

We consider a multi-robot system with $N$ agents. The state of each agent $l\in \{1,..,N\}$ is denoted as $x_l \in \reals^{n}$ and follows the dynamics 
\eqn{
  \dot x_l = f_l(x_l) + g_l(x_l)u_l 
\label{eq::dynamics_multi_agent}
}
where $f_l:\reals^{n_l}\rightarrow \reals^{n_l}, g_l:\reals^{n_l}\rightarrow \reals^{m_l}$ are locally Lipschitz continuous functions and $u_l \in \mathcal U_l \subset \reals^{m_l}$ is the control input of each agent $l$. Note that in general the agents can be of heterogeneous dynamics, i.e., $f_p\neq f_q$, $g_p\neq g_q$ for $p\neq q$. 

The set of neighbors of agent $l$ is denoted as $\mathcal{N}_l=\{1,..,N\}\setminus \{l\}$. We consider a decentralized scenario where each agent decides its own control input under the following assumption:
\begin{assumption}
\label{assumption:multi_agent}
    Each agent $l$ has perfect observations of the state $x_o$ and its derivative $\dot x_o$ of every other agent $o\in \mathcal N_l$. 
\end{assumption}

We consider pairwise safety constraints (i.e., constraints affecting a pair of two agents $i,j$) denoted $h_{ij}:\mathbb R^{n_i}\times \mathbb R^{n_j} \rightarrow \mathbb R$, so that their zero-superlevel sets denoted $\mathcal S_{ij}=\{x_i, x_j \; | \; h_{ij}(x_i,x_j)\geq 0\}$ define sets of safe states such that $\partial \mathcal S_{ij}=\{x_i, x_j \;|\; h_{ij}(x_i,x_j)=0\}$ and $\textrm{Int}(\mathcal S_{ij})=\{x_i, x_j \;|\; h_{ij}(x_i,x_j)>0\}$, and their time derivatives along the system dynamics \eqref{eq::dynamics_multi_agent} read:  
\begin{align}
\dot h_{ij}(x_i,x_j)=\frac{\partial h_{ij}}{\partial x_i} \; \dot x_i+\frac{\partial h_{ij}}{\partial x_j} \; \dot x_j.
\label{eq::pairwise-derivative}
\end{align}
We note that for $(x_i, x_j)\in \partial \mathcal S_{ij}$, i.e., when $h_{ij}(x_i,x_j)=0$, the sign of \eqref{eq::pairwise-derivative} should be non-negative for the value of the function $h_{ij}$ to not decrease further along the trajectories \eqref{eq::dynamics_multi_agent}, i.e., for the trajectories to remain within $\mathcal S_{ij}$. Based on this, we say that at a given time and states $(t,x_i,x_j)$, an agent $j$ has one of the following behaviors towards agent $i$:
\begin{itemize}
    \item Cooperative: When the dynamics of agent $j$ is such that $\frac{\partial h_{ij}}{\partial x_j}\dot x_j \geq 0$ 
    \item Noncooperative: When the dynamics of agent $j$ is such that $\frac{\partial h_{ij}}{\partial x_j}\dot x_j < 0$.
\end{itemize}


We clarify that the definitions above do not aim to classify intentions (e.g., malicious vs non-malicious) of the agent $j$ w.r.t. agent $i$. For example, in the presence of multiple agents, a possibly non-malicious agent $j$ may still be perceived as noncooperative by agent $i$ if agent $j$ is moving towards agent $i$ under the influence of another agent $k$ that is moving towards $j$. Our objective is not to classify whether agent $j$ is malicious or not, but rather to tune the controller of the agent $i$ so that it responds safely and accordingly to the different types of neighbor agents, and the type of motion exhibited by each one of them. For example, a fast (respectively, slow) adversarial agent $j$ will ideally warrant more (respectively, less) conservative response from the agent $i$. In the sequel we often refer to the agent $i$ as an ego agent, and a neighbor agent $j$ as a neighbor agent.

A dynamic environment with moving agents implies that the world is inherently time-varying from the perspective of an agent $i$. We propose an online adaptation scheme to shape the response of the ego agent $i$ so that it becomes less or more conservative w.r.t. its neighbor agents $j$ based on observations of their states and their derivatives. 
To this end we develop a \textit{trust metric} based on instantaneous observations that is used to modify $\theta_{\alpha_ij}^k$.

\subsubsection{Design of Trust Metric}
\label{section::trust_metric}
Consider the following CBF constraint

\eqn{
    \dot \psi^{k-1}_{ij} & \geq -\alpha_{ij}^{k}( \psi^{k-1}_{ij} ) \nonumber\\
    \implies \frac{\partial \psi^{k-1}_{ij}}{\partial x_i}\dot{x}_i + \frac{\partial \psi^{k-1}_{ij}}{\partial x_j}\dot x_j  + \frac{\psi_{ij}^{k-1}}{\partial \Theta_{ij}}\dot \Theta_{ij}& \geq -\alpha_{ij}^{k-1}( \psi^k_{ij} ) \label{eq::cbf_two_states}
}

\begin{figure}[htp]
    \centering
    \includegraphics[scale=0.25]{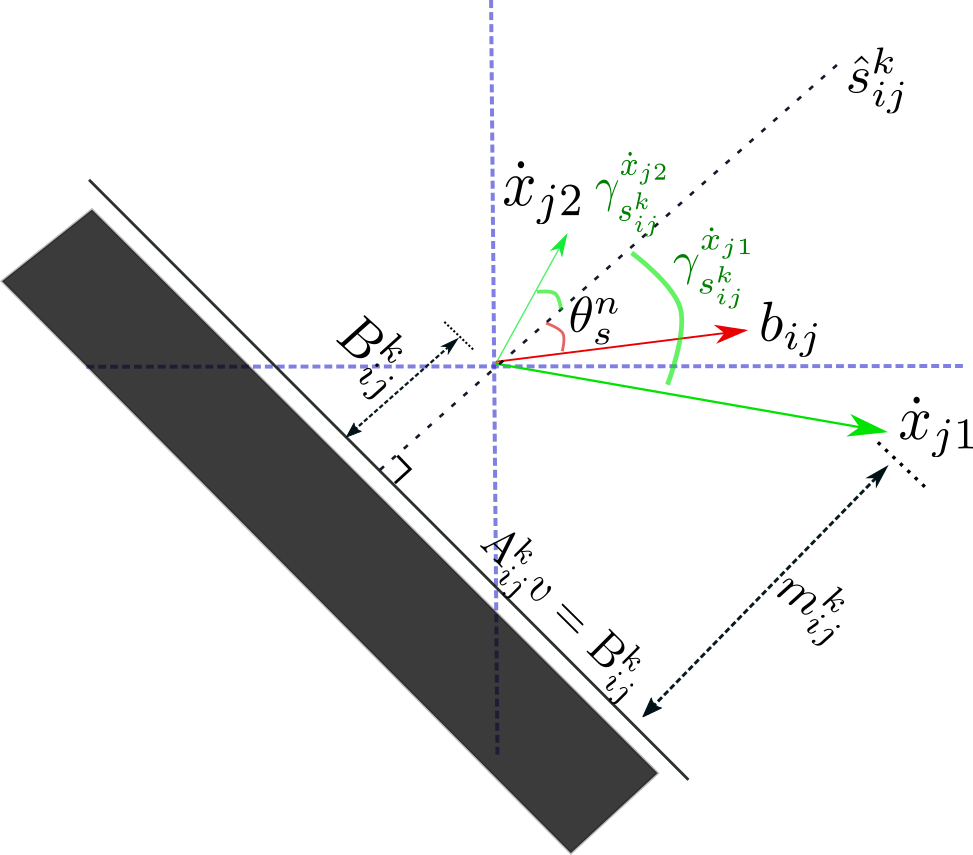}
    \caption{\small{Illustration of the halfspaces induced by the constraint function. $Av\geq b$ represents the halfspace described by \eqref{eq::allowed_halfspace} with $A = \frac{\partial h_{ij}}{\partial x_j}$, $v=\dot{x}_j$, and $b =  -\alpha h - \max \left\{ \frac{\partial h_{ij}}{\partial x_i}\dot{x}_i \right\}$. The two $x_j$ are possible instances of the actual motion of agent $j$. $\hat{s}$ is the normal to the hyperplane $Av=b$. $\hat n_j$ is the nominal direction of motion for agent $j$.}
    }
    \label{fig:allowed_space}
\end{figure}

 where note that although $\dot \Theta_{ij}$ appears in \eqref{eq::cbf_two_states}, the $r_{ij}-k+1$-order derivative to be designed will not contribute in the equation (see \eqref{eq::derived_barriers_example} for example). The margin by which the CBF condition (\ref{eq::cbf_two_states}) is satisfied, i.e., the value of $\dot \psi^{k-1}_{ij} +\alpha_{ij}^{k-1}( \psi^{k-1}_{ij} )$ represents of the ease of satisfaction of constraint w.r.t agent $i$. The best case action by agent $i$ thus corresponds to the largest possible margin for a given motion $\dot{x}_j$ of agent $j$. 
 
\noindent \textbf{Constraint margin based trust score:} The motions of agent $j$ that lead to a feasible solution to (\ref{eq::cbf_two_states}) are considered as \textit{safe} by $i$. Consider the following decomposition
   \eqn{
      \dot \psi^{k-1}_{ij} = \frac{\partial \psi^{k-1}_{ij}}{\partial x_i}\dot{x}_i + \frac{\partial \psi^{k-1}_{ij}}{\partial x_j}\dot x_j + \frac{\partial \psi^{k-1}_{ij}}{\partial \alpha^{k}_{ij}}\dot \alpha^{k}_{ij}
   }
   Here, we have ignored the higher order derivatives of $\alpha^{m}_{ij}, m<k-1$. The safe motions of $j$ are thus given by
    \eqn{
     \frac{\partial \psi^{k-1}_{ij}}{\partial x_j}\dot x_j & \geq - \alpha_{ij}^{k}( \psi^{k-1}_{ij} ) -  \frac{\partial \psi^{k-1}_{ij}}{\partial \alpha^{k}_{ij}}\dot \alpha^{k}_{ij} - \frac{\partial \psi^{k-1}_{ij}}{\partial x_i}\dot{x}_i  \nonumber \\
     & \geq - \alpha_{ij}^{k}( \psi^{k-1}_{ij} ) - \frac{\partial \psi^{k-1}_{ij}}{\partial \alpha^{k}_{ij}}\dot \alpha^{k}_{ij} \nonumber \\
     & \hspace{2cm} - \max_{u_i \in \mathcal{U}} \left\{ \frac{\partial \psi^{k-1}_{ij}}{\partial x_i}\dot{x}_i \right\}
\label{eq::allowed_halfspace}
    }
where the last term in \eqref{eq::allowed_halfspace} can be obtained from the following Linear Program (LP):
\begin{subequations}\label{eq::LP}
    \begin{align}      \begin{split}\label{eq::LPa}
             \max_{u_i} \quad & \frac{\partial \psi^k_{ij}}{\partial x_i}\dot{x}_i 
             \vspace{-0.1cm}
        \end{split}\\ 
        \begin{split} \label{eq::LPb}
            \textrm{s.t.} \quad & \dot{\psi}^k_{il} \geq -\alpha_{il} \psi^k_{il}, ~~\forall l\in N_i\setminus j.
        \end{split}
    \end{align}
\end{subequations}
    
The inequality \eqref{eq::allowed_halfspace} gives a lower bound on motions of agent $j$, a violation of which will lead to the infeasibility of QP controller of agent $i$. The design of the trust factor is thus based on a comparison of the contribution of agent $j$ with the aforementioned lower bound. 


The evaluation of  \eqref{eq::allowed_halfspace} as equality represents a halfspace whose separating hyperplane has the normal direction $\hat s_{ij}^k=\frac{\partial \psi^k_{ij}}{\partial x_j}$, and has been visualized in Fig. \ref{fig:allowed_space}.
Let the half-space in \eqref{eq::allowed_halfspace} be represented in the form $A_{ij}^kv \geq B_{ij}^k$ with $v = \dot{x}_j$ and $A_{ij}^k,B_{ij}^k$ defined accordingly. Let $m_{ij}^k$ be the constraint margin by which \eqref{eq::allowed_halfspace} is satisfied
\eqn{
m_{ij}^k = A_{ij}^kv + B_{ij}^k 
}
where $m_{ij}^k<0$ is an incompatible vector, corresponding to scenarios that should never happen. The larger the value of $m_{ij}^k$, the greater the ease of satisfaction for agent $i$. Hence, a larger $m_{ij}^k$ allows room for relaxing the constraint. Therefore, the constraint margin-based trust score is designed as
\eqn{
\rho_{m_{ij}^k} = f_d(m_{ij}^k), k\in \{ 1,..,r \}
\label{eq::rho_d}
}
where $f_d:\reals^+\rightarrow[0,1]$ is a monotonically-increasing, Lipschitz continuous function, such that $f_d(0)=0$. A possible choice would be $f_d(d) = \tanh(\beta d)$, with $\beta$ being a scaling parameter. \\

\noindent \textbf{Belief based trust score:} 
We now describe a method to incorporate $i$'s prior belief of the motion of agent $j$. Note that infinitely many values of $\dot x_j$ may satisfy the CBF derivative condition \eqref{eq::allowed_halfspace} with the same constraint margin (all $\dot x_j$ lying on the yellow hyperplane in Fig. \ref{fig:allowed_space} will have the same $m_{ij}^k$). This gives us an additional degree of freedom for deciding which motions are more trustworthy. Consider the scenario shown in Fig.\ref{fig::infinite_solutions} where an agent $j_1$ is moving directly towards $i$ with speed $2$ and another agent moving at an angle $\phi$ w.r.t the vector $x_i - x_j$ but with speed $2/\cos45 \degree$. Both $j_1$ and $j_2$ would thus lead to the same constraint margin but we would like to trust $j_2$ more if we had a belief that it is supposed to be moving in that direction or because it is not moving straight towards $i$. We denote this nominal belief direction of agent $j$ w.r.t agent $i$ as $b_{ij}$. If $\dot x_j$ performs worse than the nominal direction, then it is a cause of concern as it is not consistent with the $i's$ belief for agent $j$. Note that $b_{ij}$ need not be the true direction of motion of agent $j$ for our algorithm to work since if $b_{ij}$ is not known, we can consider the worst-case scenario where $b_{ij}=x_i-x_j$, i.e., that $j$ is an adversary and intends to collide with agent $i$; over time, our algorithm will learn that $j$ is not moving along $b_{ij}$ and therefore increases $i's$ trust of $j$. Knowledge of nominal direction plays an important role in shaping belief as it helps to distinguish between uncooperative and adversarial agents.

\begin{figure}[htp]
    \centering
    \includegraphics[scale=0.15]{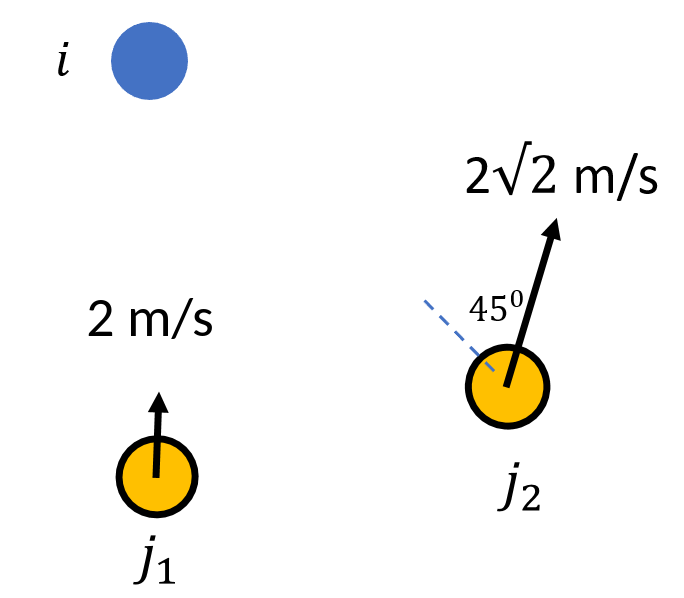}
    \caption{\small{Agents $j_1$ and $j_2$ contribute the same constraint margin for satisfying the CBF derivative condition \eqref{eq::cbf_two_states}.     }
    }
    \label{fig::infinite_solutions}
\end{figure}

Suppose the angle between the vectors $b_{ij}$ and $\hat s_{ij}^k$ is denoted by $\gamma_{s_{ij}}^{b_{ij}}$, and the angle between $\dot x_j$ and $\hat s_{ij}$ is denoted by $\gamma_{s_{ij}^k}^{\dot x_{j}}$. The belief-based trust is designed as
\eqn{
\rho_{\gamma_{ij}} = f_\gamma( \gamma^{b_{ij}}_{s_{ij}^k}/\gamma^{\dot x_j}_{s_{ij}^k} ),
\label{eq::rho_theta}
}
where $f_\gamma:\reals^+ \rightarrow [0,1]$ is again a monotonically-increasing Lipschitz continuous function with $f_\gamma(0)=0$. Note that even if $\gamma^{b_{ij}}_{s_{ij}^k}=\gamma^{b_{ij}}_{s_{ij}^k}$, i.e., $j$ is perfectly following its nominal direction, we do not design $f_\gamma$ to be 1 as the robot might be uncooperative. We give $f_\gamma$ higher values when $\gamma^{b_{ij}}_{s_{ij}^k}<\gamma^{b_{ij}}_{s_{ij}^k}$, as with $a_{j_2}$ in Fig. \ref{fig:allowed_space}, $j$ seems to be compromising its nominal movement direction for improved safety, thus leading to a higher score. Finally, when ${b_{ij}}_{s_{ij}}<\gamma^{b_{ij}}_{s_{ij}}$, as with $a_{j_1}$ in Fig. \ref{fig:allowed_space}, then $j$'s current motion is performing worse for inter-robot safety than its nominal motion and therefore has lower trust than the latter case. \\



\noindent \textbf{Final Trust Score:} The trust metric is now designed based on $\rho_{m_{ij}^k}$ and $\rho_{\gamma_{ij}^k}$. Let $\bar \rho_{m_{ij}^k}\in (0,1)$ be the desired minimum robustness/margin in satisfying the CBF condition \eqref{eq::allowed_halfspace}. Then, the trust metric $\rho_{ij}\in [-1,1]$ is designed as follows:
\eqn{
\rho_{ij}^k = \left\{ \begin{array}{cc}
    (\rho_{m_{ij}^k}-\bar \rho_{m_{ij}^k})\rho_{\gamma_{ij}^k},  & \mbox{if } \rho_{m_{ij}^k} \geq \bar \rho_{m},\\
     (\rho_{m_{ij}^k}-\bar \rho_m)(1-\rho_{\gamma_{ij}^k}),  & \mbox{if } \rho_{m_{ij}^k} < \bar \rho_m.
\end{array} \right.
\label{eq::trust}
}
Here, $\rho_{ij}^k=1$ and $-1$ represent the absolute belief in another agent being cooperative and noncooperative, respectively. If $\rho_{m_{ij}^k}>\bar \rho_m$, then we would like to have $\rho_{ij}^k>0$, and its magnitude is scaled by $\rho_{\gamma{ij}^k}$ with smaller values of $\rho_{\theta_{ij}^k}$ conveying low trust. Whereas, if $\rho_{m_{ij}^k}<\bar \rho_m$, then we would like the trust factor to be negative. A smaller $\rho_{\gamma_{ij}^k}$, in this case, implies more distrust and should make the magnitude larger, hence the term $1-\rho_{\gamma_{ij}^k}$.
The trust $\rho$ is now used to adapt $\alpha$ with following equation
\eqn{
    \dot{\alpha}_{ij}^k = f_{\alpha}(\rho_{ij}^k),
    \label{eq::alpha_dot}
}
where $f_{\alpha}:[-1,1]\rightarrow \reals$
 is a monotonically increasing function. A positive value of $f_{\alpha_{ij}^k}$ relaxes the CBF condition by increasing $\alpha_{ij}^k$, and a negative value decreases $\alpha_{ij}^k$.
Note that \eqref{eq::allowed_halfspace} needs $\dot \alpha_{ij}^{k-1}$. Therefore, we first design $\dot \alpha_{ij}^1$ and then use it in the design of $\dot \alpha_{ij}^2$ and repeat this procedure for higher order derivatives. This is similar to the cascaded control structures.



\subsubsection{Projection to Feasible Parameter Space}
To ensure the feasibility of the QP, the heuristically designed parameter update rules are passed as the desired derivatives in Algorithm \ref{algo::rt_cbf_point_optimal} of Section \ref{section::trust_metric}. For real applications, we employ the sampled-data approach in our simulations. However, since this approach requires projecting to one time step ahead future state, it requires knowledge of the closed-loop dynamics of the multi-agent system which is unknown to individual agents. We therefore implement a robust version of our Algorithm \ref{algo::rt_cbf_point_optimal} based on the following assumption
\begin{assumption}
    The closed-loop of every agent has a bounded change every sampling period, that is, $||\dot x_i(t) - \dot x_j(t-\Delta t)||\leq \Delta F $. This bound is known to the other agent.
    \label{assumption::multi_agent_assumption}
\end{assumption}
Assumption \ref{assumption::multi_agent_assumption} would imply, for example, that if agent $i$ observes the other agent to be moving at $2m/s$ at time $t-\Delta t$, then the actual velocity of $j$ at time $t$ would be $2\pm \Delta v$, where $\Delta v$ is a known constant. This assumption leads to less conservative results than assuming a worst-case bound on dynamics and does not require continuity of controllers. This bound is used in Lines 5-11 of Algorithm \ref{algo::rt_cbf_point_optimal} to update the parameters of \classK function. 

\section{Simulation Results}
\label{section::simulation}
Code and videos available at: \href{https://github.com/hardikparwana/RateTunableCBFs}{https://github.com/hardikparwana/RateTunableCBFs}.
In this section, we present three simulation studies that show the efficacy of our proposed algorithms.

\subsection{Pointwise Optimal Controller}
Consider the green ego agent surrounded by two black non-ego agents shown in Fig. \ref{fig::si_trajs} that is reminiscent of a commonly encountered traffic scenario. The motion of non-ego agents, given by two black lines, is known to the ego agent and the free space available to the ego agent shrinks initially and then becomes constant.
We compare CBF-QP with RT-CBF-QP that implements Algorithm \ref{algo::rt_cbf_point_optimal}. The barrier function used is $h_i=(x-x_i)^2$ where $x$ is the position of the ego agent and $x_i$ is the position of the non-ego agent $i$. We simulate two dynamics for ego agents: 1) single integrator (SI) with first-order barrier functions $h_1,h_2$ and 2) double integrator (DI) with second-order barrier functions $h_1, h_2$. The reference controller is chosen as $u_r(x)=-k_x (x-6.0)$ for SI and $u_r(x) = k_v(\dot x + k_x*(x-6.0))$ for DI where $k_x=1.0, k_v=2.0$ are controller gains. In all our simulations, the fixed parameter value in CBF-QP is also the initial value of parameters for RT-CBF-QP. Fig \ref{fig::si_trajs} shows some of the trajectories that result from the choice of different parameters. The trajectories end at the point of infeasibility or when simulation ends at $t=5.5 s$. The sensitivity of the initial location is further analyzed in Fig. \ref{fig::si_di_state_space} that shows the variation in time to failure for different initial positions but the same \classK parameters. For DI, the initial velocity is zero. Failure is said to happen when either RT-CBF-QP becomes infeasible or the barrier function $h_i\rightarrow 0$ (in which case $\nu_i^k\rightarrow \infty$). The domain of state space that defines a valid initial initial conditions is larger for RT-CBFs compared to CBF.

\begin{figure}[h!]
\centering
         \includegraphics[scale=0.6]{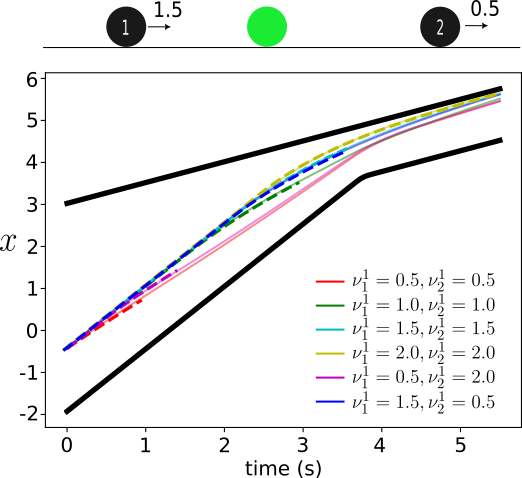}
         \caption{\small{Trajectories with different initial parameter values in CBF-QP \eqref{eq::CBF-CLF-QP} (dotted lines) and RT-CBF-QP \eqref{eq::RT-HOCBF-CLF-QP} (solid lines) for initial $x=-0.5$. Dotted lines end when CBF-QP becomes infeasible. RT-CBF-QP never becomes infeasible and solid lines extend till the end of the simulation. Simulation is run for 5.5 seconds.}}
         \label{fig::si_trajs}
\end{figure}

\begin{figure}[h!]
\centering
         \includegraphics[scale=0.6]{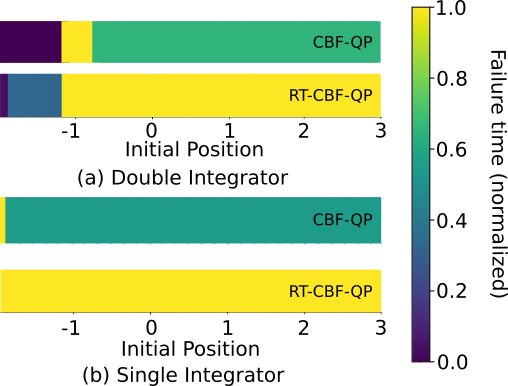}
         \caption{\small{Time of failure as a function of the initial location $x_0\in [-1.95, 3.0]$ of the double integrator agent. The initial velocity is zero. Algorithm \ref{algo::rt_cbf_point_optimal} always increases the time horizon over which the controller is feasible. Time $t$ is normalized as $t/t_f$ with scaling factor $t_f = 5.5$ seconds.}}
         \label{fig::si_di_state_space}
\end{figure}

\subsection{Adaptive Cruise Control}
We simulate the Adaptive Cruise Control (ACC) problem with a decelerating leader. Consider an ego agent moving with velocity $v$ and a leader agent moving at velocity $v_L$ with distance $D$ between them. The dynamics of are given by
\eqn{
\begin{bmatrix}
    \dot v \\
    \dot v_L \\ 
    \dot D
\end{bmatrix} = \begin{bmatrix}
    \frac{-1}{M}F_r(v) + \frac{1}{M} u \\
    a_L \\
    v_L - v
\end{bmatrix}
}
where $M$ denotes the mass of ego agent, $F_r(v)$ is the resistance force defined as $F_r(v)=f_0\textrm{sign} (v) + f_1 v + f_2 v^2$ 
with $f_0,f_1,f_2>0$, and $a_L$ is the acceleration of the leader. The safety objective of the ego agent is to maintain a minimum distance $\bar D$ with the leader. The desired velocity is given by $v_d$. Additionally, the velocity and control input are constrained by
\eqn{
  v_{min} \leq v \leq v_{max} \\
  -c M g \leq u \leq c M g
}
where $c$ is acceleration coefficient and $g$ is acceleration due to gravity. To ensure safety, we formulate three barrier functions as follows
\eqn{
h_1 = D - \bar D, h_2 = v - v_{min}, h_3 = v_{max}-v
}
where $h_1$ is second-order barrier whereas $h_2,h_3$ are first-order. The CLF is chosen as $V = (v-v_d)^2$. We further choose linear \classK functions as in \eqref{eq::linear_classK_example} and apply Algorithm \ref{algo::rt_cbf_point_optimal} to ensure safety. The following parameters are used for the simulation: $M=1650$ kg, $f_0=0.1, f_1=5, f_2=0.25$, $g=9.81m/s^2$, $v_{mijn}=0$, $v_{max}=0$, $v_d=24m/s$, $c=0.4$, $\bar D=10$. The leader is designed to slow down in the first $10s$ as
\eqn{
a_L = \left\{ \begin{array}{cc}
   -0.6(1-\tanh\left( \frac{1}{2(10-t)} \right)  & t\leq 10 \\
    0 & t>10
\end{array}
\right.
}
The objective of the CBF-QP to ACC is designed in the same way as \cite{ames2016control}. We compare CBF-QP with RT-CBF-QP in Figs.\ref{fig::acc_vel}-\ref{fig::acc_control}. The initial value of the parameters are $\nu_i^1=0.5, \nu_1^2=0.7, \nu_2^1=0.5, \nu_2^2=0.5$. The exponential rate of convergence for CLF is $k=10$. The initial state is $v=20m/s, v_L=10m/s, D=100m$. The simulation is run for $50$ seconds. CBF-QP (with fixed parameters) becomes infeasible before the simulation finishes as shown in Fig. \ref{fig::acc_vel}. RT-CBF-QP on the other hand can maintain feasibility and safety at all times. The variations of barrier function, $\nu_1^2$, and $u$ with time are shown in Figs.\ref{fig::acc_barrier}, \ref{fig::acc_param}, \ref{fig::acc_control} respectively. Note that $\nu_1^2$ starts increasing as control input is approached. We design the desired parameter value $\nu_{i,d}^1$ in \eqref{eq::sampled_data_rule} to be a P controller with gain $1$ and saturated at $1$ to steer $\nu_i^2$ to $0.7$. As seen in Fig. \ref{fig::acc_param}, $\nu_i^2$ returns to its nominal value of $0.7$ eventually. Other parameters do not change in this example and their variation is thus not shown. This example illustrates that, for a chosen barrier function $h$, the feasibility of CBF-QP controllers is highly dependent on parameters but online adaptation can help circumvent this issue.

\begin{figure}[h!]
\centering
         \includegraphics[scale=0.5]{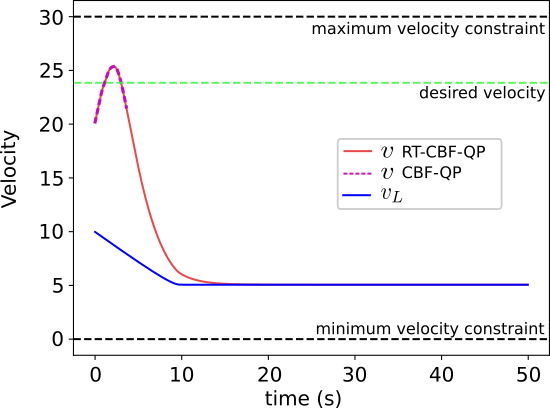}
         \caption{\small{Velocity variation for the ACC problem. The dotted pink line ends when CBF-QP becomes infeasible. The leader decelerates for the first 10 seconds and then moves at constant velocity. The ego-agent approaches the boundary of the safe set (see Fig. \ref{fig::acc_barrier}) and hence does not have enough bandwidth to steer back towards its desired velocity of $24m/s$. }}
         \label{fig::acc_vel}
\end{figure}

\begin{figure}[h!]
\centering
         \includegraphics[scale=0.5]{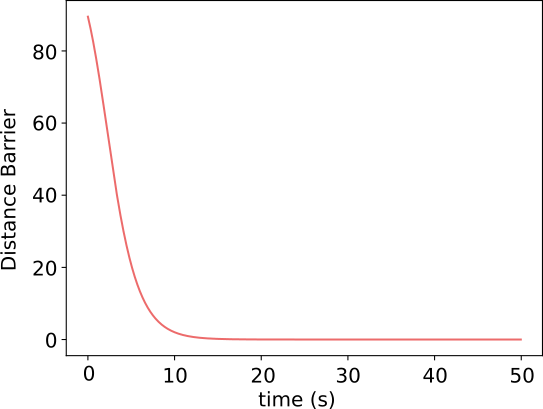}
         \caption{\small{The variation of barrier $h_1$ with time for the ACC problem. The ego agent approaches the minimum allowed distance and maintains it.}}
         \label{fig::acc_barrier}
\end{figure}

\begin{figure}[h!]
\centering
         \includegraphics[scale=0.5]{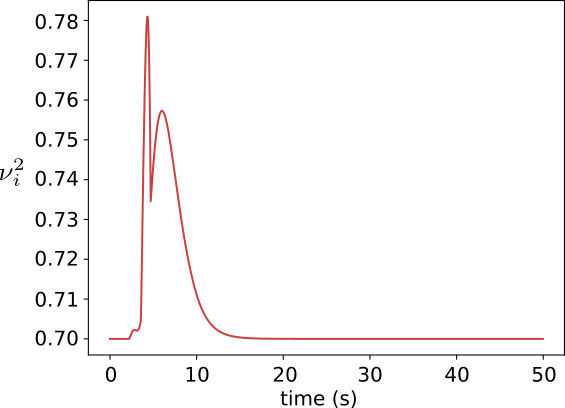}
         \caption{\small{The variation of parameter $\nu_1^2$ with time for the ACC problem. $\nu_i^1$ starts increasing as control bounds are approached. It comes back to its nominal value of $0.7$ eventually because of the $\nu^1_{1,d}$ in \eqref{eq::sampled_data_rule}}}.
         \label{fig::acc_param}
\end{figure}

\begin{figure}[h!]
\centering
         \includegraphics[scale=0.5]{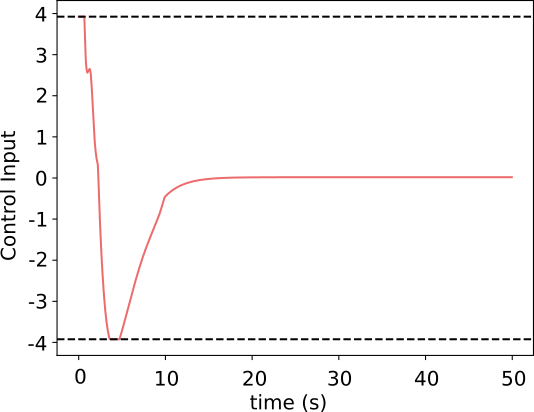}
         \caption{\small{The variation of normalized control input with time for the ACC problem. Normalization is done as $v/M^2$. }}
         \label{fig::acc_control}
\end{figure}

\subsection{Decentralized Multi-Robot Control}
Videos and code available on our website.
We consider a decentralized multi-robot system. The behavior of the neighbors of each agent can be either cooperative or non-cooperative as defined in the earlier section. For the simulation case study, we consider the non-cooperative neighbors of each agent (called ego agent) to be of two behaviors, namely: 1) adversarial; if a neighbor is adversarial, then it means that it is simulated to move head-on towards the ego agent; and 2) uncooperative; if a neighbor is uncooperative then it means that it is simulated to move along an arbitrary direction (resembling a dynamic obstacle). We provide results for two scenarios as shown in Fig. \ref{fig::multi_agent_scenarios}. The initial locations of each ego agent are marked with green circles, uncooperative agents with blue circles, and adversarial agents with red circles. Note that since an agent can be adversarial with respect to some agent but uncooperative to some other, we clarify later on in each case study which agent we are referring to. 

For scenario 1, there are three ego agents $N=1,2,3$ that are modeled as unicycles; note however that none of them assumes that the other ego agents are cooperative. The agents $1,2,3$ use the first-order barrier functions from \cite{wu2016safety} in order to enforce safety. The agent depicted in blue (Agent 5) is adversarial with respect to Agent 1, and is modeled as a single integrator, and the agent in red (Agent 4) is uncooperative (w.r.t. all agents) and moves to the left at constant velocity. None of the ego agents know the identities of any other agent in the system, including other ego agents. The initial values of $\nu_{\alpha_{ij}}$ are all chosen to be 0.8 and the resulting trajectories and variations in barrier function value and $\classK$ function parameters are shown in Figs. \ref{fig::scenario1} and \ref{fig::scenario1_plots}. 

We also compare our results with fixed parameter case. No solution exists to QP\eqref{eq::HOCBF-CLF-QP} after some time when all $\nu_{alpha_{ij}}$ are initialized to 0.8. Hence, assuming identities are known, we choose $\theta_{\alpha_{ij}}$ 2.0 when $j$ is an ego agent and 0.8 otherwise. The fixed parameter CBF controller fails to lead the robots to their goal locations because of the conservative response wherein it steers the ego agents too far away from other agents to ensure safety. In the RT-CBF implementation on other hand, the value of parameters increases when the trust is positive, that is when it is easy to satisfy the CBF constraint, as shown in Fig.\ref{fig::scenario1_plots}. The parameters decrease when trust is negative which happens when an agent is moving too fast towards the ego agent or is very close. Specifically, when agent 1 deviates to the right ($t=0$ to $1.5$s), it increases the trust for agents 2,3 and one of the noncooperative agents as they do not seem to chase after agent 1 and therefore leads to positive trust. Whereas since the adversary and another uncooperative agent move towards agent 1, their trust is 0 or a small negative and does not affect the parameters much. Next, when agent 1 turns back to the left, it perceives a negative trust for agents 2,3 as they seem to be moving towards agent 1. Eventually, since the adversary slows down as it approaches agent 1 (the adversary's controller is based on Lyapunov function) and because the uncooperative agents steer far away, agent 1 is more capable of evading the adversary and therefore positive trust to it. This leads to the transient increase in \classK parameter (red) for collision avoidance with an adversary in Fig. \ref{fig::scenario1_theta}. In our simulation, the beliefs that each ego agent employs are straight-line motions parallel to $Y$ axis for other ego agents and parallel to $X$ axis for other agents.

In scenario 2, shown in Fig. \ref{fig::scenario2}, the ego agents labeled 2 and 3 are modeled as unicycles and use the first-order barrier function from \cite{wu2016safety}, and the agents labeled as 5,6 are modeled as double integrators and use the second-order barrier function $h={ij}=s^2-s{min}^2$ where $s$ is the distance between agents $i,j$ and $s_{min}$ is minimum allowed distance. The adversarial agent chases the unicycle labeled as 2. The uncooperative agent moves to the left at constant velocity but has a visibility cone that must be avoided by ego-agents. We again choose linear $\classK$ functions with $\theta_{\alpha_{ij}}$ initialized to 0.8 for first order barrier function and $\theta_{\alpha_{ij}^0}$ to 0.4 and $\theta_{\alpha_{ij}^1}$ to 0.8 for second order barrier functions.
The response of the fixed parameter case is again to be very conservative compared to the proposed method. 

\begin{figure}[h!]
    \centering
    \begin{subfigure}[b]{0.5\textwidth}
         \centering
         \includegraphics[scale=0.5]{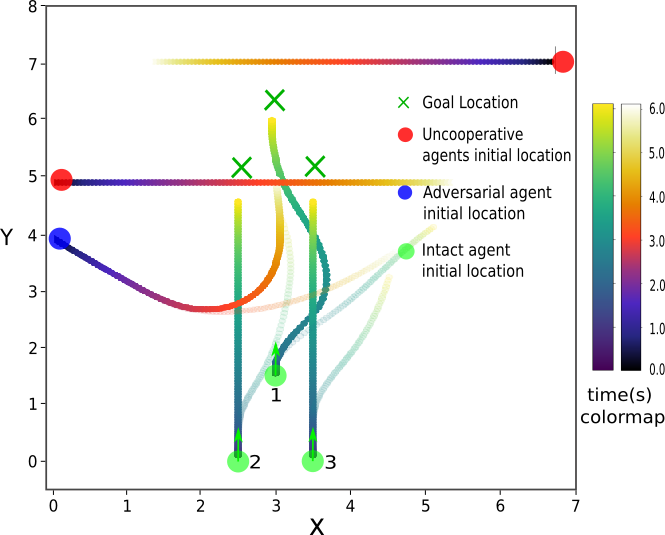}
         \caption{\small{Scenario 1}}
         \label{fig::scenario1}
     \end{subfigure}
     \begin{subfigure}[b]{0.5\textwidth}
         \centering
         \includegraphics[scale=0.45]{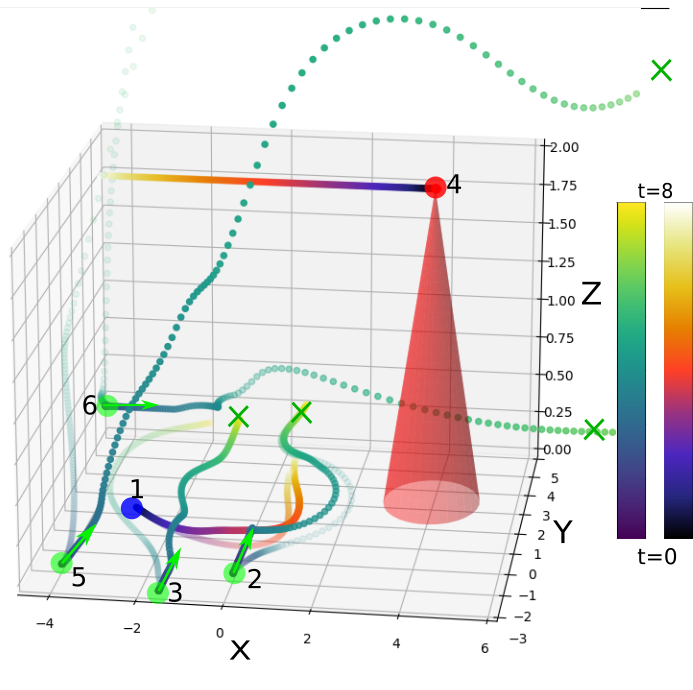}
         \caption{\small{Scenario 2}}
         \label{fig::scenario2}
     \end{subfigure}
    \caption{The timestamp is given by the colormap. Paths with bold colors result from the proposed method. Paths with increased transparency result from the application of CBFs with fixed $\classK$ function parameter.}
    \label{fig::multi_agent_scenarios}
    \vspace{-3mm}
\end{figure}


\begin{figure}[h!]
    \centering
    \begin{subfigure}[b]{0.5\textwidth}
         \centering
         \includegraphics[scale=0.5]{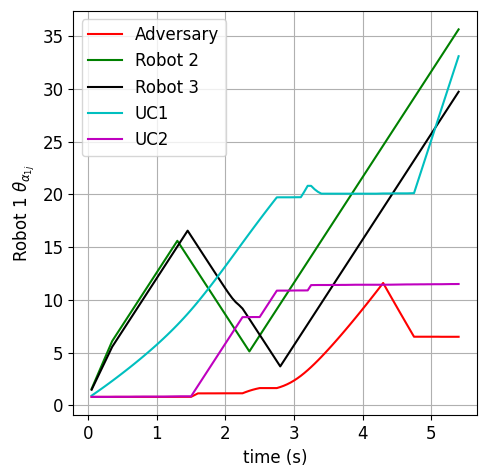}
         \caption{\small{Scenario 1 Parameter Variation}}
         \label{fig::scenario1_theta}
     \end{subfigure}
     \begin{subfigure}[b]{0.5\textwidth}
         \centering
         \includegraphics[scale=0.45]{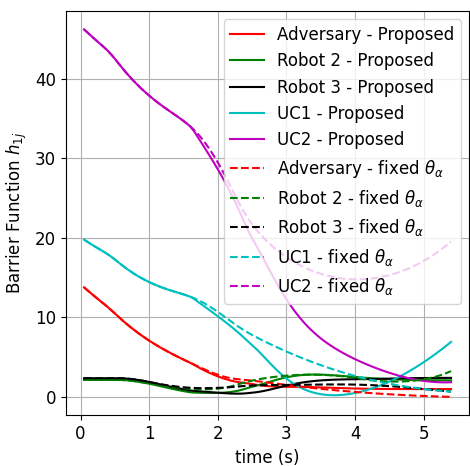}
         \caption{\small{Scenario 1 Barriers}}
         \label{fig::scenario1_barriers}
     \end{subfigure}
        \caption{\small{Scenario 1: Variation of $\classK$ function parameters (a) and barrier functions (b) of Robot 1 with time. UC1, UC2 are the uncooperative agents starting from left and right respectively in Fig. \ref{fig::scenario1}. Trust-based relaxation allows agents to go closer to the safe set boundary ($h_{ij}=0$) compared to fixed $\alpha$ case, and hence leads to a less conservative response while still guaranteeing safety. }}
    \label{fig::scenario1_plots}
    \vspace{-3mm}
\end{figure}

\section{Conclusion}
\label{section::conclusion}
We introduced a new notion of Rate Tunable CBFs that allows for online tuning of CBF-based controllers by adapting the $\classK$ function parameters. We introduce the notion of designing parameter dynamics to enforce compatibility between multiple barrier constraints so that a solution to QP exists. Keeping barrier functions separate rather than combining them into a single barrier function allowed us to shape responses to each of them separately. Finally, we proposed an algorithm to derive pointwise sufficient conditions on \classK parameters for ensuring compatibility of constraints. Furthermore, we present a case study on multi-agent systems wherein we introduce \textit{trust metrics} to design update laws for $\classK$ function parameters. Future work includes analyzing the continuity properties of CBF-QP controllers and exploring if time-varying parameters can be used to improve the continuity of these controllers. We would also like to devise methods based on long-horizon prediction to design optimal update laws for parameters and improve upon the myopic nature of CBF-QP controllers. 




\bibliographystyle{IEEEtran}
\bibliography{journal1.bib}

\begin{IEEEbiography}[{\includegraphics[width=1in,height=1.25in,clip,keepaspectratio]{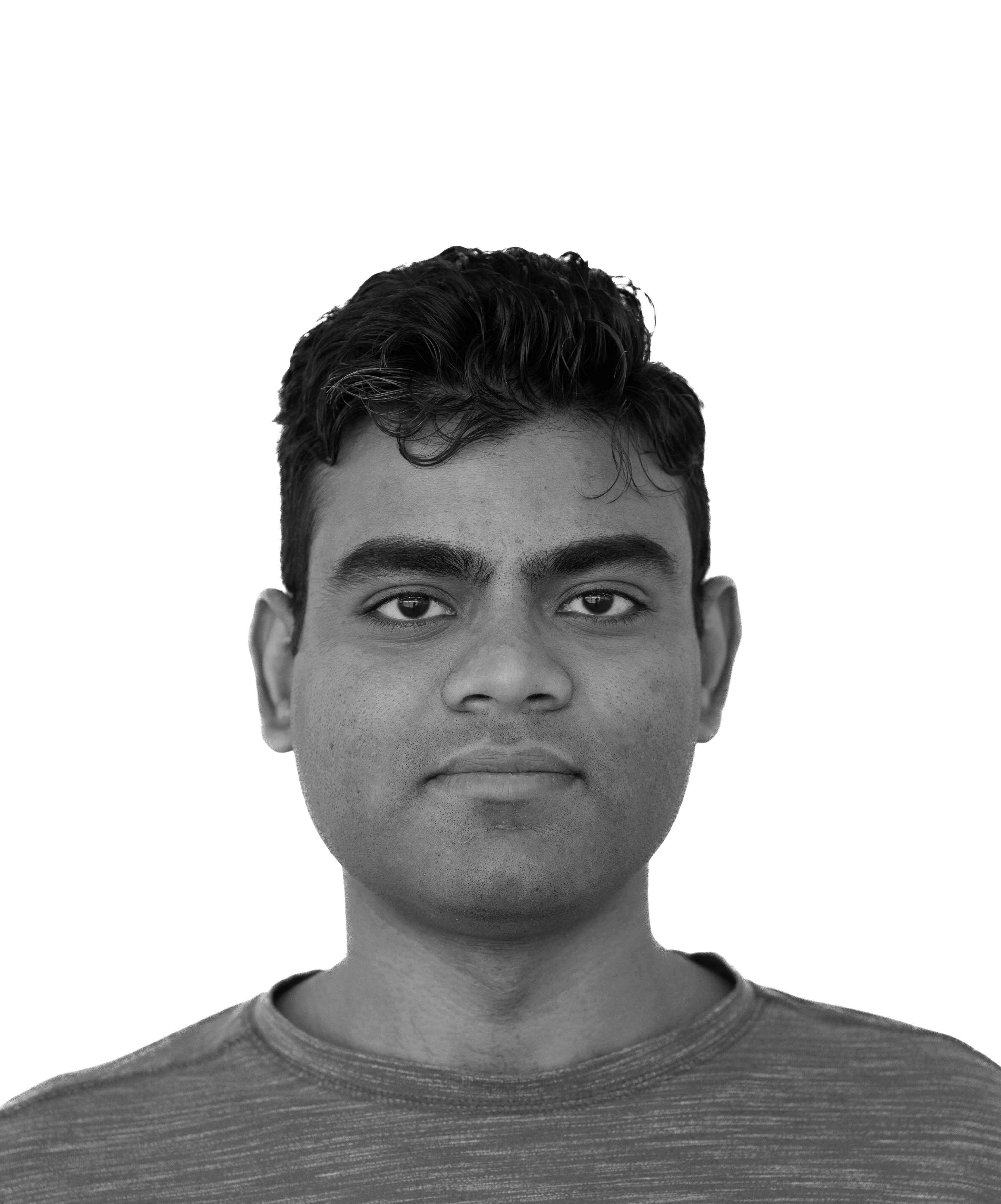}}]{Hardik Parwana} (M'22) received his B.Tech degree in Aerospace Engineering from Indian Institute of Technology Kanpur, India, in 2017, and M.Eng degree in Mechanical Engineering and Science from Kyoto University, Japan, in 2020. 

He is currently a Ph.D. student in the Department of Robotics, University of Michigan at Ann Arbor, MI, USA. His research interests are in safe control and learning for autonomous systems.
\end{IEEEbiography}

\begin{IEEEbiography}[{\includegraphics[width=1in,height=1.25in,clip,keepaspectratio]{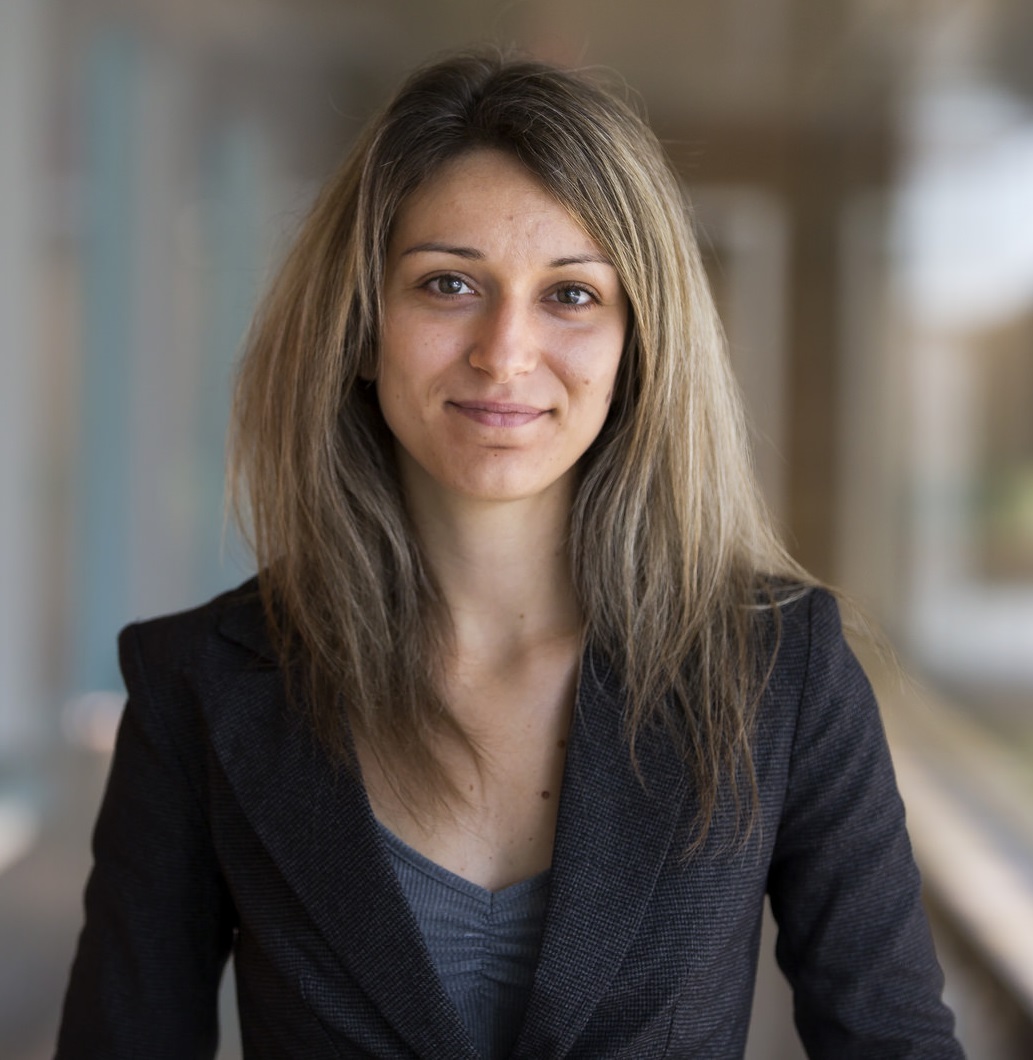}}]{Dimitra Panagou} received the Diploma and PhD degrees in Mechanical Engineering from the National Technical University of Athens, Greece, in 2006 and 2012, respectively. In September 2014 she joined the Department of Aerospace Engineering, University of Michigan as an Assistant Professor. Since July 2022 she is an Associate Professor with the newly established Department of Robotics, with a courtesy appointment with the Department of Aerospace Engineering, University of Michigan. Prior to joining the University of Michigan, she was a postdoctoral research associate with the Coordinated Science Laboratory, University of Illinois, Urbana-Champaign (2012-2014), a visiting research scholar with the GRASP Lab, University of Pennsylvania (June 2013, Fall 2010) and a visiting research scholar with the University of Delaware, Mechanical Engineering Department (Spring 2009). Her research program spans the areas of nonlinear systems and control; multi-agent systems and networks; motion and path planning; human-robot interaction; navigation, guidance, and control of aerospace vehicles. She is particularly interested in the development of provably-correct methods for the safe and secure (resilient) operation of autonomous systems in complex missions, with applications in robot/sensor networks and multi-vehicle systems (ground, marine, aerial, space). She is a recipient of the NASA Early Career Faculty Award, the AFOSR Young Investigator Award, the NSF CAREER Award, and a Senior Member of the IEEE and the AIAA.

\end{IEEEbiography}

\appendix

\subsection{Proof of analytical solution of QP for Theorem 3}
\label{appendix::analytical_qp_soln}

Here we provide a derivation of the CBF-QP controller \eqref{eq::cbf_qp_simple}. The procedure is similar to the one done in \cite{ames2016control} but simplified as we ignore the Lyapunov equation. Consider the following CBF-QP
\begin{subequations}
    \eqn{
        \min_u \quad &  ||u-u_{r}(t,x)||^2 \\
        \textrm{s.t.} \quad &  \dot h(t,x,u) \geq -\alpha_\theta(h(t,x))
        }
\end{subequations}

The above can be written in generic form as
\begin{subequations}
\eqn{
\min_u \quad & u^T Q u + G(t,x)^T u \label{eq::qp_generic_objective}\\
\textrm{s.t.} \quad & A(t,x) u \geq b(t,x) 
}
\label{eq::qp_generic_form}
\end{subequations}

where $Q\in S_m, G\in \reals^{m\times 1}, A\in \reals^{1\times m}, b\in \reals$. The solution of the above QP can be found analytically. Substituting $u = \sqrt{Q}^{-1} u' + \bar u$, where $\bar u = Q^{-1}G$ is the point that minimizes the objective \eqref{eq::qp_generic_objective}, we can reduce \eqref{eq::qp_generic_form} to the following minimum norm problem
\begin{subequations}
    \eqn{
      \min_{u'} \quad & ||u'||^2 \\
      \textrm{s.t.} \quad & A \sqrt{Q}^{-1} u' \geq b + AQ^{-1}G
    }
\end{subequations}

The solution is thus
\eqn{
u = \left\{ 
    \begin{array}{cc}
        Q^{-1} \Bigg[ A^T \frac{b + AQ^{-1}G}{||A\sqrt{Q}^{-1}||} - G\Bigg] & \textrm{if } b+AQ^{-1}G \geq 0 \\
        -Q^{-1}G & \textrm{if } b+AQ^{-1}G \leq 0
    \end{array}
\right.
}
Substituting for $A = L_gh, b=-\alpha(h)-L_f h, Q = 2I, G = -2u_r$, we get
\eqn{
u_{QP} = \left\{ 
    \begin{array}{cl}
        u_r - \frac{1}{\sqrt{2}}\frac{L_gh^T}{||L_gh||}(L_fh+L_gh u_r+ \alpha(h)) & \textrm{if } \psi^1 < 0 \\
        u_r & \textrm{if } \psi^1 \geq 0
    \end{array}
\right.
}
where $\psi^1 = L_f(x) h(t,x) + L_g(x) h(t,x) u_r+ \alpha_\theta(h(t,x))$.

Now suppose we would like the solution of this optimization, hereby referred to as $u_{QP}$ to match a safe control input $u_d$. Further, suppose that $\psi^1 < 0$. Then we have
\eqn{
u_d - u_{QP} = u_d - u_r + \frac{1}{\sqrt{2}} \frac{L_gh^T}{||L_gh||}(L_fh+L_gh u_r+ \alpha(h))
}
Note that $u_d - u_{QP}=0$ only if $u_d-u_r$ is in the span of the vector $L_gh^T$. This equation serves to show two trivial observations: First, despite the existence of a CBF being a necessary condition for safety\cite{ames2016control}, the \textit{myopic} CBF-QP controller is not complete, that is, it cannot result in every possible safe trajectory of the system. Note that existing works to improve the optimality of CBFs can also be attributed to making them more complete.

\end{document}